\documentclass[12pt]{article}

\usepackage{amsmath,amssymb,mathtools}
\usepackage{microtype}

\allowdisplaybreaks

\newtheorem{theorem}{Theorem}[section]
\newtheorem{corollary}[theorem]{Corollary}
\newtheorem{lemma}[theorem]{Lemma}
\newtheorem{proposition}[theorem]{Proposition}
\newtheorem{remark}[theorem]{Remark}

\newcommand{\one}{\mathbf{1}}
\newcommand{\calB}{\mathcal{B}}
\newcommand{\R}{\mathbb{R}}
\newcommand{\eps}{\varepsilon}
\newcommand{\tr}{\operatorname{tr}}
\newcommand{\spanop}{\operatorname{span}}

\newenvironment{pf}[1][Proof]{\noindent\textbf{#1.} }{\hfill\rule{1mm}{2mm} \par\vspace{0.6\baselineskip}}

\begin{document}

\title{Extremal graphs for a conjecture on the square energy of graphs}
\author{Fu-Tao Hu, Ya-Yang Liu, Yi Wang\thanks{E-mail address: wangy@ahu.edu.cn} \\
{\small Center for Pure Mathematics, School of Mathematical Sciences, Anhui University,}\\
{\small Hefei, 230601, P.R. China}
}

\date{}
\maketitle

\begin{abstract}
For a graph $G$, let $s^+(G)$ and $s^-(G)$ denote the sums of the squares
of its positive and negative adjacency eigenvalues.  We determine all
equality cases in the conjecture of Elphick, Farber,
Goldberg, and Wocjan that every connected graph $G$ on $n$ vertices satisfies
\[
  \min \{s^+(G),s^-(G)\}\ge n-1.
\]
Namely, equality for $s^+$ holds exactly for trees,
whereas equality for $s^-$ holds exactly for trees and complete graphs.  
The proof combines the $P_3$-removal lemma in the no-cut-vertex case with a
detailed equality analysis of the underlying doubly nonnegative matrix
inequality.  Every block is forced to be complete, and a
minimal-counterexample argument gives an exact rank-one decomposition of
the folded matrix $M^c$.  The resulting non-edge vanishings, together with
$AX=XA$, rule out an interface between a bridge and a nontrivial block.

\vskip10pt\noindent {\bf Key words:} adjacency eigenvalue; positive square energy; negative square energy;
 doubly nonnegative matrix; block-cut tree
 
\vskip10pt\noindent {\bf AMS Subject Classification:} {Primary 05C50; Secondary 15A18, 15A42}
\end{abstract}

\section{Introduction}

Let $G$ be a finite simple graph with adjacency matrix $A=A(G)$ and
adjacency eigenvalues $\lambda_1,\ldots,\lambda_n$.  The energy of $G$,
introduced in connection with molecular orbital theory, is the
$\ell_1$-norm of its adjacency spectrum,
\[
  \mathcal E(G)=\sum_{i=1}^n |\lambda_i|;
\]
see the monograph of Li, Shi, and Gutman \cite{LiShiGutman}.  Positive and
negative square energies are the corresponding quadratic quantities on the
two sides of the spectrum.  More generally, positive and negative
$p$-energies have become useful spectral invariants in problems involving
colorings, homomorphisms, graph operations, and extremal eigenvalue
inequalities; see, for example,
\cite{WocjanElphick,AndoLin,GuoSpiro,CoutinhoSpier,
CoutinhoSpierZhang,AkbariVertex,ElphickTangZhang,TangLiuWang}.

We write $N_G(v)$ for the neighborhood of a vertex $v$ and define
\[
  s^+(G)=\sum_{\lambda_i>0}\lambda_i^2,
  \qquad
  s^-(G)=\sum_{\lambda_i<0}\lambda_i^2.
\]
These quantities arose naturally in spectral lower bounds for chromatic
parameters.  Wocjan and Elphick \cite{WocjanElphick} proposed a bound using
all adjacency eigenvalues, and Ando and Lin \cite{AndoLin} proved, for every
graph with at least one edge, the ratio inequality
\[
  1+\max\left\{\frac{s^+(G)}{s^-(G)},
                    \frac{s^-(G)}{s^+(G)}\right\}\le \chi(G).
\]
Guo and Spiro \cite{GuoSpiro} replaced the chromatic number by the
fractional chromatic number, while Coutinho and Spier
\cite{CoutinhoSpier} obtained a vector-chromatic formulation.  The
semidefinite and conic mechanisms behind such estimates were developed
further by Coutinho, Spier, and Zhang \cite{CoutinhoSpierZhang}; this line
of work is closely related to optimization over positive semidefinite and
doubly nonnegative cones \cite{BermanShaked,VandenbergheBoyd}.

Since $\tr A^2=2|E(G)|$, one always has
\begin{equation}\label{eq:trace-sum}
  s^+(G)+s^-(G)=2|E(G)|.
\end{equation}
In 2016, Elphick, Farber, Goldberg, and Wocjan
\cite{EFGW} conjectured that every connected graph on $n$ vertices satisfies
\begin{equation}\label{eq:EFGW-conjecture}
  \min\{s^+(G),s^-(G)\}\ge n-1.
\end{equation}
The problem was subsequently emphasized in the survey of Liu and Ning
\cite{LiuNing}.  By \eqref{eq:trace-sum}, the conjecture is equivalent to
\[
  \max\{s^+(G),s^-(G)\}\le 2|E(G)|-|V(G)|+1,
\]
which simultaneously controls the entire positive and negative parts of
the spectrum and strengthens Hong's classical estimate for the spectral
radius \cite{Hong}.  The bound is sharp at both extremal types of connected
graphs: every tree $T$ has
\[
  s^+(T)=s^-(T)=|E(T)|=|V(T)|-1,
\]
whereas $s^-(K_n)=n-1$.

A substantial literature developed around \eqref{eq:EFGW-conjecture}.  The
original paper \cite{EFGW} established the conjecture for several graph
classes, including bipartite and regular graphs.  Abiad, de Lima, Desai,
Guo, Hogben, and Madrid \cite{AbiadEtAl} obtained structural results and
verified further families.  Elphick and Linz \cite{ElphickLinz} studied the
pronounced asymmetry between the two square energies, while Elphick and
Aouchiche \cite{ElphickAouchiche} derived Nordhaus--Gaddum-type consequences
under the conjecture.  Zhang \cite{ZhangExtremal} introduced
semidefinite, superadditivity, and vertex-removal tools and proved
\[
  \min\{s^+(G),s^-(G)\}\ge n-\gamma(G)\ge \frac n2
\]
for graphs without isolated vertices, where $\gamma(G)$ is the domination
number.  Independently, Akbari, Kumar, Mohar, and Pragada
\cite{AkbariLinear} proved a general linear lower bound and later developed
vertex-partition methods for positive and negative $p$-energies
\cite{AkbariVertex}.  Further refinements for positive square energy,
including graphs with small domination number and a quantitative
$P_3$-removal lemma, were obtained by Akbari, Kumar, Mohar, Pragada, and
Zhang \cite{AKMPZ}.  Related progress includes results for unicyclic graphs
\cite{NingZeng}, monotonicity questions under edge addition
\cite{TangLiuWang}, and path-minimality phenomena for positive
$p$-energies \cite{LiuTangPath}.

Liu, Tang, and Zhang recently proved \eqref{eq:EFGW-conjecture} in full
\cite{LTZ}.  Their central idea is to replace the special matrices
$A_+\circ A_+$ and $A_-\circ A_-$ by the entire doubly nonnegative cone and
to prove a sharp graph-supported matrix inequality there.  The same
relaxation also yields a positive square-energy strengthening of Tur\'an's
theorem in a companion work \cite{LTZTuran}.  Once the non-strict lower
bound was settled, the natural remaining question was to determine all
equality cases.  Akbari, Kumar, Mohar, Pragada, and Zhang stated precisely
the following characterization as \cite[Conjecture~9.2]{AKMPZ}.

We prove that conjecture.

\begin{theorem}\label{thm:main}
Let $G$ be a connected graph on $n$ vertices.  Then
\[
  s^+(G)=n-1 \quad\Longleftrightarrow\quad G\text{ is a tree},
\]
and
\[
  s^-(G)=n-1 \quad\Longleftrightarrow\quad
  G\text{ is a tree or a complete graph}.
\]
\end{theorem}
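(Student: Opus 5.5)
The "if" directions are direct computations. For a tree $T$ the spectrum is symmetric about $0$ because $T$ is bipartite. Together with \eqref{eq:trace-sum} this gives $s^+(T)=s^-(T)=|E(T)|=n-1$. For $K_n$ the eigenvalues are $n-1$ and $-1$ with multiplicity $n-1$, so $s^-(K_n)=n-1$. For $n\ge 3$ we have $s^+(K_n)=(n-1)^2>n-1$, which is consistent with $K_n$ being excluded from the $s^+$ statement when $n\ge3$; for $n\le 2$ the graph $K_n$ is itself a tree. The real content is the converse in each case.

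For the converse I will work inside the doubly nonnegative relaxation of Liu, Tang and Zhang. Write $A=A_+-A_-$ with $A_\pm\succeq0$ and $A_+A_-=0$, and set $X=A_\pm\circ A_\pm$ as appropriate. Then $X$ is doubly nonnegative, $X$ commutes with $A$ (being built from spectral projections), and $s^\pm(G)=\sum_{ij\in E}2(A_\pm)_{ij}^2+\sum_{i}(A_\pm)_{ii}^2$-type expressions are linear in $X$. The LTZ inequality bounds such a linear functional below by $n-1$ over all DNN matrices that satisfy the graph-supported normalization. I would assume equality, $s^\pm(G)=n-1$, and then trace every inequality in the LTZ proof to force equality at each step.

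The key steps, in order, are as follows.

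\emph{Step 1: blocks.} Take a minimal counterexample $G$ with $s^\pm(G)=n-1$ that is not of the claimed form. In the no-cut-vertex (2-connected) case, the $P_3$-removal lemma gives strict inequality whenever $G$ contains an induced $P_3$. Hence a 2-connected equality graph is complete. Combined with superadditivity over the block-cut tree (equality must propagate to each block), every block of $G$ is either $K_2$ or some $K_m$ with $m\ge3$. For $s^+$ a block $K_m$ with $m\ge3$ already forces strict inequality, since $s^+(K_m)>m-1$; so every block is $K_2$, $G$ is a tree, and the $s^+$ statement is proved.

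\emph{Step 2: the $s^-$ interface.} For $s^-$, it remains to exclude a graph containing both a bridge and a block $K_m$ with $m\ge3$. By connectivity some cut vertex $v$ lies in both such a bridge and such a clique, and by minimality I may take this configuration to be extremal in the block-cut tree. From equality I would derive an exact rank-one decomposition of the folded matrix $M^c$ (the DNN matrix compressed along the cut), so that $X$ restricted near $v$ has the form $uu^\top$ plus blocks supported on the separate components. Equality in the entrywise step should then force $X_{ij}=0$ for non-adjacent pairs $i,j$ straddling $v$.

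\emph{Step 3: contradiction via commutation.} Finally I would use $AX=XA$ at the entries $(w,x)$, where $w$ is the bridge neighbor of $v$ and $x$ is a clique vertex other than $v$. One side reduces to terms like $X_{vx}$, which is positive because of the clique structure, while the other side consists of vanishing non-edge entries. This contradiction rules out the interface.

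The main obstacle is Step 2. The LTZ proof is a chain of inequalities, and extracting rigid structure from equality in every link requires care. In particular, proving that $M^c$ is exactly rank one, and not merely that some PSD remainder has zero trace, will need the minimality hypothesis to handle the subgraphs on both sides of the cut vertex.
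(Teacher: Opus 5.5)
\textbf{Step 1 does not go through.} The $P_3$-removal lemma compares $s^\sigma(G)$ with $s^\sigma(G-u)$ for the original graph. It therefore settles only the case where $G$ itself has no cut vertex. Your passage to the blocks via ``superadditivity over the block-cut tree (equality must propagate to each block)'' has no spectral basis, for two reasons. First, $s^+$ is not superadditive over blocks: for the bowtie $F_2$ (two triangles sharing a vertex), $s^+(F_2)=(11+\sqrt{17})/2<8=2s^+(K_3)$. Second, for $s^+$ the claim that $s^+(G)=n-1$ forces $s^+(B)=|V(B)|-1$ on every block already implies the whole statement, so it cannot be assumed. For the same reason, a minimal counterexample among graphs gives you no leverage, since spectral equality does not pass to subgraphs.

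What does propagate through a cut vertex is equality in the doubly nonnegative inequality. But the resulting local matrix on a block is an arbitrary DNN matrix supported on $B$, not $A_\pm(B)\circ A_\pm(B)$. So neither $P_3$-removal nor the value of $s^+(K_m)$ applies to it. You need a separate equality analysis of the no-cut-vertex estimates (the flat Cauchy--Schwarz bound and the vertex-deletion averaged bound). That analysis gives $B=K_r$ with local matrix $\alpha_B^2J$, where $\alpha_B=(r-1)/r$. This configuration is an equality case for both signs, so nothing at this stage excludes $K_m$ blocks when $s^+(G)=n-1$. The $s^+$ case is not finished after Step 1; it needs the same leaf-block and interface analysis as $s^-$.

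\textbf{Steps 2--3 miss a case.} Complete blocks together with ``neither a tree nor complete'' do not produce a bridge meeting a clique: the bowtie has no bridge. Your commutation argument does not transfer to a clique--clique interface $K_s$--$K_t$ either. The same computation gives $(AX)_{uw}=\eps(t-1)/t$ against $(XA)_{uw}=\eps(s-1)/s$, which is no contradiction when $s=t$.

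The missing ingredient is a leaf-block lemma that uses the actual spectral parts. In a leaf block $K_t$ with $t\ge3$ and cut vertex $v$, any two non-cut vertices $x,y$ are adjacent twins, so $e_x-e_y$ is a $(-1)$-eigenvector.
For $s^+$, this gives $(A_-)_{xx}=1/t$ beside edge values $-(t-1)/t$, so $A_-$ has a negative $2\times2$ principal minor.
For $s^-$, it gives $(A_+)_{\ell x}=(t-1)/t$ for every non-cut $\ell$ and every $x\in V(B)$. Then $A_+^2=AA_+$ forces $(A_+)_{vw}=0$ for all $w\notin V(B)$, contradicting the positive edge value at $v$ unless $G=B$.
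Only once all leaf blocks are known to be bridges does a block-cut-tree path from a nontrivial block to a leaf give the bridge--clique interface, for both signs.

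Three corrections to Step 3 itself:
first, take $X=A_\pm$ itself, not $A_\pm\circ A_\pm$, since the Hadamard square does not commute with $A$;
second, the non-edge vanishings come from $M^c=\sum_B\alpha_B^2\one_B\one_B^{\top}$, which is a sum of one rank-one term per block, not a rank-one matrix;
third, $(XA)_{uw}$ is not a sum of vanishing terms but equals the bridge entry $X_{uv}=\eps/2$, so the contradiction is $\eps(t-1)/t\ne\eps/2$ and depends on the exact edge values from the rigidity lemma.
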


\begin{corollary}\label{cor:strict}
If a connected graph $G$ is neither a tree nor a complete graph, then
\[
  \min \{s^+(G),s^-(G)\}>|V(G)|-1.
\]
\end{corollary}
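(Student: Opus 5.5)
The statement to prove is Corollary~\ref{cor:strict}, and it follows directly from Theorem~\ref{thm:main} combined with the inequality $\min\{s^+(G),s^-(G)\}\ge n-1$ for connected graphs, proved in \cite{LTZ}. Let $G$ be connected on $n=|V(G)|$ vertices, and assume $G$ is neither a tree nor a complete graph. By \cite{LTZ}, both $s^+(G)\ge n-1$ and $s^-(G)\ge n-1$. So it suffices to rule out equality in each of the two inequalities separately.

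First, suppose $s^+(G)=n-1$. The first equivalence of Theorem~\ref{thm:main} then forces $G$ to be a tree, which contradicts our hypothesis. Hence $s^+(G)>n-1$.

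Second, suppose $s^-(G)=n-1$. The second equivalence of Theorem~\ref{thm:main} then forces $G$ to be a tree or a complete graph, which again contradicts our hypothesis. Hence $s^-(G)>n-1$.

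Since both quantities strictly exceed $n-1$, their minimum does as well. This gives $\min\{s^+(G),s^-(G)\}>|V(G)|-1$.

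None of these steps is an obstacle once Theorem~\ref{thm:main} is available; all the substance lies in that equality characterization. The only point to watch is that the strict inequality uses the non-strict bound of \cite{LTZ}. Theorem~\ref{thm:main} by itself only excludes equality, and we need the lower bound to conclude that each quantity is strictly greater rather than merely different from $n-1$.
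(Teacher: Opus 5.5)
Your proof is correct and matches the paper's argument: you combine the non-strict bound $\min\{s^+(G),s^-(G)\}\ge |V(G)|-1$ of Liu, Tang, and Zhang with Theorem~\ref{thm:main}, which excludes equality in each inequality. You also rightly point out that the non-strict bound is what turns ``$\ne n-1$'' into ``$>n-1$''.
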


Two different ingredients are needed.  First, the $P_3$-removal lemma of
Zhang \cite{ZhangExtremal} gives an immediate strict improvement over
$n-1$ whenever the original endpoint graph is noncomplete and has no cut
vertex.  Thus any remaining noncomplete equality graph has a nontrivial
block-cut tree.  Second, equality must be propagated through the doubly
nonnegative argument of Liu, Tang, and Zhang \cite{LTZ}.  This propagation
produces local matrices on the blocks, but those matrices need not be
Hadamard squares of spectral parts of the corresponding subgraphs.
Consequently, the $P_3$-removal lemma alone cannot determine the local block
structure.  We analyze equality in the no-cut-vertex part of the matrix
inequality and show that every block is complete and that its local matrix
is uniquely determined.

The remaining issue is compatibility across cut vertices.  We introduce
normalized equality pairs and retain the folding notation $M^c$ used in the
original argument.  A minimal-counterexample proof, obtained by peeling off
a leaf block, shows that
\[
  M^c=\sum_{B\in\mathcal B(G)}
  \left(\frac{|V(B)|-1}{|V(B)|}\right)^2
  \mathbf 1_B\mathbf 1_B^{\top}.
\]
This exact identity is stronger than merely knowing the local matrices.  It
implies, through blockwise zero-sum vectors, that specified non-edge entries
of the relevant spectral part vanish.  If a bridge meets a nontrivial complete
block, these vanishings force two different values for the same entry in
the commutation identity $AX=XA$, which is impossible.  The block-cut-tree
argument then gives Theorem~\ref{thm:main}.

The paper is organized as follows.  Section~\ref{sec:endpoint} records the
spectral reductions, the doubly nonnegative inequality, and the
$P_3$-removal treatment of graphs without cut vertices.
Section~\ref{sec:blocks} develops the equality structure and proves that all
blocks are complete.  Section~\ref{sec:leaf-exact} treats leaf blocks and
proves the exact decomposition of $M^c$.  The bridge--clique obstruction is
established in Section~\ref{sec:obstruction}, and
Section~\ref{sec:proof-main} completes the proof.

\section{Preliminaries and endpoint equality}
\label{sec:endpoint}

For real matrices $B,C$ of the same order, write
$\langle B,C\rangle=\tr(B^{\top}C)$ and
$\lVert B\rVert_F^2=\langle B,B\rangle$.  For a real symmetric matrix
$M$, write $M\succeq0$ when $M$ is positive semidefinite and $M\ge0$ when
it is entrywise nonnegative.  A matrix satisfying both conditions is called
\emph{doubly nonnegative}.  For each positive integer $r$, let $J_r$ denote
the $r\times r$ all-ones matrix; when its order is clear, we simply write
$J$.  For matrices $B,C$ of the same order, let $B\circ C$ denote their
Hadamard product.

The spectral decomposition of the adjacency matrix gives
\begin{equation}\label{eq:spectral-parts}
  A=A_+-A_-,
  \qquad
  A_+,A_-\succeq0,
  \qquad
  A_+A_-=0,
\end{equation}
where $A_+$ agrees with $A$ on its positive eigenspaces and $A_-$ agrees
with $-A$ on its negative eigenspaces.  Thus
\[
  s^+(G)=\tr(A_+^2),
  \qquad
  s^-(G)=\tr(A_-^2).
\]

For a connected graph $G$, put
\[
  q(G)=2|E(G)|-|V(G)|+1.
\]
For a doubly nonnegative matrix $M$ indexed by $V(G)$, define
\[
  S(G,M)=\sum_{uv\in E(G)}\sqrt{M_{uv}},
  \qquad
  T(M)=\one^{\top}M\one.
\]
We use the following theorem of Liu, Tang, and Zhang
\cite[Theorem~2.1]{LTZ}.

\begin{theorem}[Liu--Tang--Zhang]
\label{thm:dnn}
If $G$ is connected and $M$ is doubly nonnegative, then
\begin{equation}\label{eq:dnn}
  4S(G,M)^2\le q(G)T(M).
\end{equation}
\end{theorem}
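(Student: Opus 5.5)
The plan is to treat \eqref{eq:dnn} as a matrix version of Hong's bound $\lambda_1(G)\le\sqrt{2|E(G)|-|V(G)|+1}$, and to transfer Hong's counting argument to the doubly nonnegative setting. As a guide, note the rank-one case first. If $M=xx^{\top}$ with $x\ge0$, put $y_v=\sqrt{x_v}$. Then $S(G,M)=\tfrac12 y^{\top}Ay$ and $T(M)=\lVert y\rVert^4$, so \eqref{eq:dnn} reads $(y^{\top}Ay)^2\le q(G)\lVert y\rVert^4$, which is exactly Hong's inequality. In general we cannot write $M$ as a nonnegative combination of such rank-one terms, since doubly nonnegative matrices need not be completely positive. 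So the proof should use only $M\succeq0$ and $M\ge0$ directly.

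Write $M$ as a Gram matrix, $M_{uv}=\langle w_u,w_v\rangle$, with all these inner products nonnegative. Then $T(M)=\lVert\sum_v w_v\rVert^2$, and we put $r_v=\sum_u M_{uv}=\langle w_v,\sum_u w_u\rangle\ge0$, so that $T(M)=\sum_v r_v$. Write $2S=\sum_v\sum_{u\sim v}\sqrt{M_{uv}}$ and apply Cauchy--Schwarz with weights $r_v$:
\[
4S^2\le\Bigl(\sum_v r_v\Bigr)\Bigl(\sum_v \frac{1}{r_v}\Bigl(\sum_{u\sim v}\sqrt{M_{uv}}\Bigr)^2\Bigr).
\]
It then suffices to show
\[
\Bigl(\sum_{u\sim v}\sqrt{M_{uv}}\Bigr)^2\le c_v\, r_v\quad\text{for every vertex }v,
\qquad\text{with}\quad \sum_v \frac{c_v r_v}{T(M)}\le q(G).
\]
The first condition is a per-vertex bound, the second an averaged one. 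For the per-vertex bound we apply Cauchy--Schwarz once more, $\bigl(\sum_{u\sim v}\sqrt{M_{uv}}\bigr)^2\le\sum_{u\sim v}\alpha_{u}\cdot\sum_{u\sim v}M_{uv}/\alpha_u$, and choose the $\alpha_u$ to be degree-like quantities, mirroring Hong's estimate $(A^2x)_v\le\sum_{u\sim v}d_u$.

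The combinatorial heart is Hong's bound $\sum_{u\sim v}d_u\le 2|E(G)|-(n-1)+d_v$. It comes from connectivity: each vertex other than $v$ and its neighbours still has at least one edge not incident to $N(v)$. In the matrix setting this counting has to be replaced by an inequality in which the entries $M_{uw}$ at non-edges $uw$, which do appear in $T(M)$, compensate. Here positive semidefiniteness should supply inequalities such as $M_{uv}^2\le M_{uu}M_{vv}$ and $\langle w_u,w_v\rangle\le\lVert w_u\rVert\,\lVert w_v\rVert$, and the nonnegativity of all off-diagonal inner products should let us discard cross terms safely.

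As a fallback, I would run an induction that removes a vertex. Take a vertex $v$ that is not a cut vertex, for instance a leaf of a spanning tree, so that $G-v$ is connected and $q(G)=q(G-v)+2d_v-1$. Let $M'$ be the principal submatrix on $V(G)\setminus\{v\}$; it is again doubly nonnegative, and $T(M)=T(M')+2r_v-M_{vv}$. By induction $2S(G-v,M')\le\sqrt{q(G-v)\,T(M')}$. One then needs
\[
\sqrt{q(G-v)T(M')}+2\sum_{u\sim v}\sqrt{M_{uv}}\le\sqrt{(q(G-v)+2d_v-1)(T(M')+2r_v-M_{vv})},
\]
and after squaring this is an inequality controlled by Cauchy--Schwarz together with $M\succeq0$. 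The main obstacle in either route is the same: making the ``$-n+1$'' term appear. Crude bounds only give $q$ replaced by $2|E(G)|$, and recovering the loss of exactly one unit per vertex requires using connectivity and positive semidefiniteness simultaneously. This is the step I would attack first, by testing the inequality on stars and on $K_n$ with $M=J$, where the inequality is tight.
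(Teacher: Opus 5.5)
\paragraph{Verdict.} Your proposal does not yet prove the statement. Both routes stop at exactly the step you flag as the main obstacle, and the fallback, as formulated, cannot be completed.

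\paragraph{The fallback fails.} Take $G=P_3$ with middle vertex $u$ and leaves $x,v$. Use the matrix $M=\tfrac14\bigl(\one_{\{x,u\}}\one_{\{x,u\}}^{\top}+\one_{\{u,v\}}\one_{\{u,v\}}^{\top}\bigr)$, for which $4S(G,M)^2=q(G)T(M)=4$. Deleting the leaf $v$ gives $q(G-v)=1$, $T(M')=5/4$ and $2\sum_{y\sim v}\sqrt{M_{yv}}=1$. Your required inequality then reads $\sqrt5/2+1\le 2$, which is false. By symmetry the same happens for $x$, and $u$ is a cut vertex, so no admissible deletion works. The reason is that the principal submatrix $M'$ keeps all of $M_{uu}$, so the inductive bound on $G-v$ is slack exactly where you need it tight.

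\paragraph{The main route has two problems.} First, your weighted Cauchy--Schwarz chain gives $4S^2\le T(M)\sum_v c_v$. So the condition you need is $\sum_v c_v\le q(G)$, not $\sum_v c_vr_v\le q(G)T(M)$. Second, and more seriously, the local degree count behind Hong's bound needs only that no vertex is isolated. The doubly nonnegative inequality, however, fails for $2K_2$ with $M=J_2\oplus J_2$, where $4S^2=16>8=qT$. Connectivity must therefore enter globally, and a per-vertex transplant of Hong's count cannot supply it. The proposal never says what replaces it.

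\paragraph{What is missing.} The missing ingredients are those of the Liu--Tang--Zhang argument, whose pieces this paper reuses.
\begin{enumerate}
\item \emph{Folding.} Replace $M$ by its folding $M^c$ from \eqref{eq:folding}. This removes all non-edge entries while preserving $S$, $T$ and double nonnegativity.
\item \emph{Splitting at a cut vertex $v$.} Split a Gram vector $z_v$ into its orthogonal projections onto the two sides, as in Lemma~\ref{lem:cut-splitting}. After folding the two spans are orthogonal, so $S$ and $T$ become additive, and $q(H_1)+q(H_2)=q(H)$. Induction together with $\sqrt{q_1T_1}+\sqrt{q_2T_2}\le\sqrt{(q_1+q_2)(T_1+T_2)}$ then closes the step. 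This splitting divides $M_{vv}$ between the two sides. It handles leaves and trees, down to the base case $K_2$, where the claim is just $2M_{12}\le M_{11}+M_{22}$.
\item \emph{No cut vertex, $r\ge3$ vertices.} Use the dichotomy behind Lemma~\ref{lem:no-cut-estimates}.
\begin{itemize}
\item If $2(r-1)w\le qd_0$, the flat bound $4S^2\le 4mw$ together with $4m=2q+2(r-1)$ gives $4S^2\le qT$.
\item Otherwise, sum the inductive bound over the deletion of every vertex. This gives $4S^2\le (q-1)T+\frac{q-1}{r-2}d_0$. Then $2(m-r+1)\le (r-1)(r-2)$ shows that the last term is less than $T$.
\end{itemize}
\end{enumerate}
Your vertex-deletion idea is therefore the right tool only in the case without cut vertices. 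Even there it works only after averaging over all vertices and pairing it with the flat estimate.
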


The Schur product theorem \cite[Section~7.5]{HornJohnson} shows that
$A_+\circ A_+$ and $A_-\circ A_-$ are doubly nonnegative.  The proof in \cite{LTZ} applies
Theorem~\ref{thm:dnn} to these matrices and obtains
\begin{equation}\label{eq:upper-q}
  s^+(G),s^-(G)\le q(G),
\end{equation}
which is equivalent to the lower bounds
$s^+(G),s^-(G)\ge |V(G)|-1$ by \eqref{eq:trace-sum}.  We record all the
information forced by endpoint equality.

\begin{lemma}\label{lem:endpoint}
Let $G$ be connected on $n\ge2$ vertices.
\begin{enumerate}
\item If $s^+(G)=n-1$ and $X=A_-$, then
\[
  T(X\circ X)=q(G),
  \qquad
  4S(G,X\circ X)^2=q(G)T(X\circ X),
\]
and
\[
  X_{uv}\le0\qquad(uv\in E(G)).
\]
\item If $s^-(G)=n-1$ and $X=A_+$, then
\[
  T(X\circ X)=q(G),
  \qquad
  4S(G,X\circ X)^2=q(G)T(X\circ X),
\]
and
\[
  X_{uv}\ge0\qquad(uv\in E(G)).
\]
\end{enumerate}
\end{lemma}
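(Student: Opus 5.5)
The plan is to rerun the chain of inequalities behind \eqref{eq:upper-q} with the relevant spectral part and observe that equality at the end forces equality at every step. I treat part~1; part~2 is identical after exchanging the roles of $A_+$ and $A_-$.

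\emph{Step 1: reduce to $s^-(G)=q(G)$ and compute $T$.} Suppose $s^+(G)=n-1$. By \eqref{eq:trace-sum},
\[
  s^-(G)=2|E(G)|-(n-1)=q(G).
\]
With $X=A_-$ symmetric,
\[
  T(X\circ X)=\sum_{u,v}X_{uv}^2=\lVert X\rVert_F^2=\tr(A_-^2)=s^-(G)=q(G).
\]
This is the first identity.

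\emph{Step 2: an edge-sum formula for $s^-$.} From \eqref{eq:spectral-parts} we get
\[
  AA_-=(A_+-A_-)A_-=-A_-^2 .
\]
Taking traces gives
\[
  s^-(G)=-\tr(AX)=-\sum_{u,v}A_{uv}X_{uv}=-2\sum_{uv\in E(G)}X_{uv}.
\]
Hence
\[
  s^-(G)\le 2\sum_{uv\in E(G)}|X_{uv}|=2\sum_{uv\in E(G)}\sqrt{(X\circ X)_{uv}}=2S(G,X\circ X).
\]
Equality holds if and only if $X_{uv}\le 0$ for every edge $uv$.

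\emph{Step 3: squeeze.} The matrix $X\circ X$ is doubly nonnegative by the Schur product theorem, so Theorem~\ref{thm:dnn} applies and gives
\[
  s^-(G)^2\le 4S(G,X\circ X)^2\le q(G)\,T(X\circ X)=q(G)\,s^-(G).
\]
Since $G$ is connected with $n\ge2$, it has an edge, so $q(G)\ge1$. Since $s^-(G)=q(G)>0$, the two ends of this chain coincide, namely $q(G)^2=q(G)^2$. Both inequalities are therefore equalities. The second yields $4S(G,X\circ X)^2=q(G)T(X\circ X)$. The first, using $S\ge0$ and $s^->0$, yields $s^-=2S$, i.e.\ the sign condition $X_{uv}\le0$ on edges.

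For part~2, suppose $s^-(G)=n-1$ and put $X=A_+$. Step~1 gives $s^+(G)=q(G)=T(X\circ X)$ in the same way. In Step~2 we use $AA_+=A_+^2$, which gives
\[
  s^+(G)=2\sum_{uv\in E(G)}X_{uv}\le 2S(G,X\circ X),
\]
with equality if and only if $X_{uv}\ge0$ on edges. Step~3 is then unchanged.

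The only point needing care is the positivity $q(G)>0$, which is used to cancel in the squeeze. It is ensured by connectedness together with $n\ge2$; there is no real obstacle beyond that.
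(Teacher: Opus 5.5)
Your proof is correct and follows the paper's argument essentially step for step: you compute $T(X\circ X)=\lVert X\rVert_F^2=s^-(G)=q(G)$, write $s^-(G)=-2\sum_{uv\in E(G)}X_{uv}\le 2S(G,X\circ X)$, and squeeze the chain $s^-(G)^2\le 4S(G,X\circ X)^2\le q(G)T(X\circ X)=q(G)^2$ to force equality throughout, which also gives the termwise sign condition. One small correction to your closing remark: positivity of $q(G)$ is not actually needed, because both ends of the chain equal $q(G)^2$ regardless, and $s^-(G),S\ge0$ suffice to pass from $s^-(G)^2=4S(G,X\circ X)^2$ to $s^-(G)=2S(G,X\circ X)$.
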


\begin{pf}
Suppose first that $s^+(G)=n-1$.  By \eqref{eq:trace-sum},
$s^-(G)=q(G)$.  Using \eqref{eq:spectral-parts},
\[
  s^-(G)=-\langle A,A_-\rangle
  =-2\sum_{uv\in E(G)}(A_-)_{uv}
  \le 2\sum_{uv\in E(G)}\lvert(A_-)_{uv}\rvert.
\]
Consequently,
\[
  s^-(G)^2
  \le 4S(G,A_-\circ A_-)^2
  \le q(G)T(A_-\circ A_-).
\]
Moreover,
\[
  T(A_-\circ A_-)
  =\sum_{u,v}(A_-)_{uv}^2
  =\lVert A_-\rVert_F^2
  =s^-(G)=q(G).
\]
The first and last terms in the preceding chain are therefore both
$q(G)^2$, so equality holds throughout.  In particular,
\[
  \sum_{uv\in E(G)}\lvert(A_-)_{uv}\rvert
  =-\sum_{uv\in E(G)}(A_-)_{uv}.
\]
Since $|x|\ge -x$ term by term, $(A_-)_{uv}\le0$ on every edge.

The proof of the second assertion is identical.  If $s^-(G)=n-1$, then
$s^+(G)=q(G)$, and
\[
  s^+(G)=\langle A,A_+\rangle
  =2\sum_{uv\in E(G)}(A_+)_{uv}
  \le2\sum_{uv\in E(G)}\lvert(A_+)_{uv}\rvert.
\]
Equality throughout the resulting chain gives
$(A_+)_{uv}\ge0$ on every edge and all the remaining assertions.
\end{pf}

It will be convenient to treat the two endpoint cases simultaneously.  From
now on, under an endpoint hypothesis, write
\begin{equation}\label{eq:X-epsilon}
  (X,\eps)=
  \begin{cases}
    (A_-,-1),&s^+(G)=|V(G)|-1,\\
    (A_+, 1),&s^-(G)=|V(G)|-1.
  \end{cases}
\end{equation}
Thus, on every edge, $X_{uv}$ has sign $\eps$.

We shall also use the following vertex-removal result of Zhang.  It is
particularly effective when the graph under consideration has no cut
vertex.

\begin{lemma}[$P_3$-removal lemma {\cite[Theorem~1.10]{ZhangExtremal}}]
\label{lem:P3-removal}
Let $H$ be a graph, and suppose that $U\subseteq V(H)$ induces a copy of
$P_3$.  For each sign $\sigma\in\{+,-\}$, there exists a vertex $u\in U$
such that
\begin{equation}\label{eq:P3-removal}
  s^{\sigma}(H)>s^{\sigma}(H-u)+1.
\end{equation}
\end{lemma}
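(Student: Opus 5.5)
The plan is to use the variational characterization of the positive and negative parts rather than eigenvalue interlacing. Write $A=A(H)$, and let $\langle\cdot,\cdot\rangle$ and $\lVert\cdot\rVert_F$ be as in Section~\ref{sec:endpoint}. The identity to use is
\[
  \sqrt{s^{+}(H)}=\lVert A_+\rVert_F=\max\{\langle A,Z\rangle : Z\succeq0,\ \lVert Z\rVert_F\le1\},
\]
with the analogous formula for $s^-$ obtained by replacing $A$ by $-A$. The case $\sigma=-$ is then the case $\sigma=+$ applied to $-A$, so I treat $\sigma=+$ and check at the end that nothing used the sign of $A$.

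Fix $u\in V(H)$ and write
\[
  A=\begin{pmatrix}B&x\\ x^{\top}&0\end{pmatrix},
\]
where $B=A(H-u)$ and $x$ is the $0/1$ indicator vector of $N_H(u)$. As a test matrix I take the positive semidefinite extension
\[
  Z=\begin{pmatrix}B_+ + \alpha\,yy^{\top}&\beta y\\ \beta y^{\top}&\gamma\end{pmatrix},
\]
with $y$ built from $x$ and its projection onto the positive eigenspace of $B$ (for instance $y=x$, or $y$ a suitable combination of $x$ and $B_+x$). This gives
\[
  s^+(H)\ge \frac{\langle A,Z\rangle^2}{\lVert Z\rVert_F^2}
  =\frac{\bigl(s^+(H-u)+\alpha\, y^{\top}By+2\beta\, x^{\top}y\bigr)^2}{\lVert Z\rVert_F^2}.
\]
The free parameters are then chosen so that the increment over $s^+(H-u)$ is at least $1$. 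A single pendant edge, where $s^+(K_2)=1$, is the model: the choice should reproduce exactly the gain $1$ of a pendant edge. It should exceed $1$ as soon as there is extra interaction, namely when $\deg u\ge2$, or when $x$ has nonzero component $x^{\top}B_+x>0$ along the positive spectral part of $B$.

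The $P_3$ hypothesis enters as follows. Let $U=\{a,b,c\}$ with $ab,bc\in E(H)$ and $ac\notin E(H)$. I would compare the three candidate deletions $u\in\{a,b,c\}$ and argue by contradiction: assume $s^+(H)\le s^+(H-u)+1$ for each of them. Equality in the bound above (and in Cauchy--Schwarz) forces the optimal $Z$ to be a scalar multiple of $A_+$. Since the test matrix has a prescribed form, this translates into rigid conditions on $A_+$: the row of $A_+$ at $u$ must be proportional to a fixed vector determined by $x$ and $B_+$, and $x$ must be orthogonal to the relevant spectral subspace of $B$. Imposing these conditions simultaneously for $u=a$, $u=b$ and $u=c$ should force $(A_+)_{ac}$ to equal two incompatible values. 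The tools for this are the commutation $AA_+=A_+A$ evaluated at the entries $(a,c)$ and $(b,b)$, together with the fact that $b$ is a common neighbour of the non-adjacent vertices $a$ and $c$.

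The main obstacle is this last step. The non-strict inequality $s^+(H)\ge s^+(H-u)+1$ for a non-isolated vertex $u$ should follow fairly directly from the test-matrix construction. Extracting strictness for at least one vertex of an induced $P_3$ is where the real work lies, because it needs an exact description of the equality case. To make that tractable, I would first try the simplest test matrix, $y=x$ with $\alpha=0$. I would handle $b$, the vertex of degree at least $2$ in $U$, separately, since for it one expects a gain that is visibly larger than $1$ unless $x$ is degenerate with respect to $B_+$. If the $b$-case does not close on its own, the fallback is to combine the three inequalities additively and show that their sum exceeds $3$.
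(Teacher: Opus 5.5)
\textbf{There is a genuine gap, and it sits in the step you treat as routine.} The paper does not prove this lemma; it imports it from Zhang (Theorem~1.10 there), so your plan has to stand on its own.

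You assert that the test-matrix construction gives the non-strict bound $s^+(H)\ge s^+(H-u)+1$ for every non-isolated vertex $u$, with a pendant edge as the model. This is false. Take the paw: a triangle $abw$ with a pendant vertex $p$ attached at $w$. Its characteristic polynomial is $(x+1)(x^3-x^2-3x+1)$, with eigenvalues approximately $2.170$, $0.311$, $-1$ and $-1.481$. Hence $s^+(\mathrm{paw})\approx 4.806$, while $s^+(\mathrm{paw}-p)=s^+(K_3)=4$, so deleting the pendant vertex lowers $s^+$ by only about $0.81$.

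A sanity check would have flagged this. If every non-isolated vertex contributed at least $1$, then peeling a connected graph in reverse breadth-first order would prove $s^+(G)\ge n-1$ in one line, and the Liu--Tang--Zhang theorem would be trivial.

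The pendant-edge heuristic fails because the gain depends on how $x$ sits relative to the negative part of $B$. Write $s^+(H)=\max_{Z\succeq0}\bigl(2\langle A,Z\rangle-\lVert Z\rVert_F^2\bigr)$ and take the natural version of your construction, $Z=(B_+\oplus0)+W$ with $W\succeq0$. Optimizing over $W$ gives exactly $s^+(H)\ge s^+(H-u)+s^+(C_u)$, where $C_u=\bigl(\begin{smallmatrix}-B_-&x\\ x^{\top}&0\end{smallmatrix}\bigr)$. Here $s^+(C_u)=\mu^2$, with $\mu>0$ the root of $\mu=x^{\top}(\mu I+B_-)^{-1}x$. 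This is strictly less than $\deg u$ as soon as $x$ has a component in the range of $B_-$, and it is about $0.63$ for $p$ in the paw. In any case, no test matrix can exceed the true increment $0.806$.

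This breaks the contradiction scheme. The hypothesis $s^+(H)\le s^+(H-u)+1$ at a vertex is not the equality case of any valid inequality. It can hold strictly, as it does at $p$, which is an endpoint of the induced $P_3$ given by $a,w,p$. So no rigidity of $A_+$ can be extracted from it, and the planned commutation argument at $(a,c)$ never starts. The $(b,b)$ entry of $AA_+=A_+A$ is a tautology in any case.

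What is missing is the actual content of the lemma. You need a way to \emph{select} a vertex of the induced $P_3$, together with a quantitative lower bound showing that its increment is strictly larger than $1$, even though the other vertices may gain less than $1$. Your fallback, showing that the three increments sum to more than $3$, would suffice logically. However, you give no mechanism for it, and proving it is essentially the whole lemma.

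A smaller point: with $y=x$ and $\alpha=0$, your test matrix is not positive semidefinite unless $x$ lies in the range of $B_+$.
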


A quantitative refinement replaces the strict increment in
\eqref{eq:P3-removal} by $1+1/16$; see
\cite[Lemma~2.4]{AKMPZ}.  The original form above is already sufficient for
our equality problem.

\begin{lemma}[The endpoint case without cut vertices]
\label{lem:no-cut-endpoint}
Let $G$ be a connected graph on $n\ge2$ vertices with no cut vertex.  If
$s^+(G)=n-1$ or $s^-(G)=n-1$, then $G$ is complete.
\end{lemma}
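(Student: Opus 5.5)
The plan is to argue by contradiction with the $P_3$-removal lemma (Lemma~\ref{lem:P3-removal}) combined with the known lower bound $s^{\sigma}(H)\ge |V(H)|-1$ for connected $H$. Fix the sign $\sigma\in\{+,-\}$ for which $s^{\sigma}(G)=n-1$, and suppose $G$ is not complete. A connected noncomplete graph contains an induced $P_3$: take two nonadjacent vertices at distance $2$ and a common neighbour. Let $U$ be its vertex set. Lemma~\ref{lem:P3-removal} then yields some $u\in U$ with
\[
  s^{\sigma}(G)>s^{\sigma}(G-u)+1 .
\]

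Next I will use the hypothesis that $G$ has no cut vertex. Since $n\ge 3$ here (a noncomplete connected graph has at least three vertices), the graph $G-u$ is connected on $n-1$ vertices. By the Liu--Tang--Zhang bound \eqref{eq:upper-q}, equivalently $s^{\sigma}\ge |V|-1$ for connected graphs, we get $s^{\sigma}(G-u)\ge n-2$. Therefore
\[
  n-1=s^{\sigma}(G)>s^{\sigma}(G-u)+1\ge n-1,
\]
which is a contradiction. Hence $G$ is complete.

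The only points to check are small ones. First, the existence of an induced $P_3$ in a connected noncomplete graph: pick a shortest path between two nonadjacent vertices, and its first three vertices induce $P_3$. Second, that $G-u$ is connected even when $G$ is $2$-connected in the weak sense (no cut vertex, $n\ge3$), so the conjecture bound applies to it. Third, that the case $n=2$ is trivially complete. No step is a real obstacle; the essential input is that the strict inequality in Lemma~\ref{lem:P3-removal} is exactly strong enough to beat the non-strict Liu--Tang--Zhang bound, and the absence of cut vertices is what guarantees the bound applies to $G-u$ whichever vertex $u$ the lemma chooses.
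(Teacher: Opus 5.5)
Your proposal is correct and follows the paper's proof essentially step for step. You take an induced $P_3$ from a shortest path between nonadjacent vertices, apply Lemma~\ref{lem:P3-removal}, use the absence of cut vertices to keep $G-u$ connected, and invoke the Liu--Tang--Zhang bound $s^{\sigma}(G-u)\ge n-2$ to reach the contradiction $s^{\sigma}(G)>n-1$.
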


\begin{pf}
Let $\sigma\in\{+,-\}$ be such that $s^\sigma(G)=n-1$.  Suppose that $G$
is not complete.  Since $G$ is connected, a shortest path
between two nonadjacent vertices contains three consecutive vertices that
induce a $P_3$.  Apply Lemma~\ref{lem:P3-removal} to the square energy under
consideration.  It gives a vertex $u$ of this induced $P_3$ such that
\[
  s^{\sigma}(G)>s^{\sigma}(G-u)+1.
\]
Because $G$ has no cut vertex, $G-u$ is connected.  The theorem of
Liu--Tang--Zhang gives
\[
  s^{\sigma}(G-u)\ge |V(G-u)|-1=n-2.
\]
Consequently $s^{\sigma}(G)>n-1$, contradicting the endpoint hypothesis.
Thus $G$ is complete.
\end{pf}

\begin{remark}\label{rem:P3-local-limitation}
Lemma~\ref{lem:no-cut-endpoint} completely settles the no-cut-vertex case
for the original endpoint graph.  It does not, however, replace the local
matrix rigidity needed below.  After splitting a normalized equality pair
at a cut vertex, the resulting local matrix is an arbitrary doubly
nonnegative matrix supported on the local graph; it need not be the
Hadamard square of a spectral part of that graph.  The $P_3$-removal lemma
therefore cannot be applied to those local pairs, and equality in the
no-cut-vertex part of Theorem~\ref{thm:dnn} must still be analyzed.
\end{remark}

\section{Equality forces complete blocks}
\label{sec:blocks}

We regard each bridge as a block $K_2$; every other block is a maximal
2-connected subgraph.  The set of blocks of a graph $H$ is denoted by
$\calB(H)$.  For $u\in V(H)$, let $e_u$ denote the corresponding standard
basis vector.  For a block $B$, let $\one_B\in\R^{V(H)}$ be the indicator
vector of $V(B)$.

A matrix $M$ indexed by $V(H)$ is said to be \emph{supported on the
diagonal and the edges of $H$} if
\[
  M_{uv}=0
  \qquad\text{whenever }u\ne v\text{ and }uv\notin E(H).
\]
We call $(H,M)$ a \emph{normalized equality pair} if $H$ is connected,
$|V(H)|\ge2$, $M$ is doubly nonnegative and supported on the diagonal and
the edges of $H$, and
\begin{equation}\label{eq:normalized-pair}
  T(M)=q(H),
  \qquad
  4S(H,M)^2=q(H)T(M)=q(H)^2.
\end{equation}

Following the folding step in \cite{LTZ}, for a doubly nonnegative matrix
$M$ indexed by $V(H)$, define its folding, denoted by $M^c$, by
\begin{equation}\label{eq:folding}
  M^c
  =M+\sum_{\{u,v\}\in\binom{V(H)}2\setminus E(H)}
  M_{uv}(e_u-e_v)(e_u-e_v)^{\top},
\end{equation}
where each unordered non-edge occurs once.  Every added summand is positive
semidefinite.  It cancels the corresponding off-diagonal non-edge entry,
preserves all edge entries, and annihilates $\one$.  Hence $M^c$ is
doubly nonnegative, is supported on the diagonal and the edges of $H$, and
\begin{equation}\label{eq:fold-preserve}
  S(H,M^c)=S(H,M),
  \qquad
  T(M^c)=T(M).
\end{equation}

The next lemma gives the precise equality propagation at a cut vertex.  The
fact that the residual vector may be assigned to either side will be crucial
in Section~\ref{sec:leaf-exact}.

\begin{lemma}[Equality propagation at a cut vertex]
\label{lem:cut-splitting}
Let $(H,M)$ be a normalized equality pair, let $v$ be a cut vertex of $H$,
and group the components of $H-v$ into two nonempty unions with vertex sets
$U$ and $W$.  Put
\[
  H_1=H[U\cup\{v\}],
  \qquad
  H_2=H[W\cup\{v\}].
\]
Take a Gram representation $M_{xy}=\langle z_x,z_y\rangle$, and write
\begin{equation}\label{eq:gram-cut}
  z_v=p_U+p_W+p_0,
\end{equation}
where $p_U$ and $p_W$ are the orthogonal projections of $z_v$ onto
\[
  \spanop\{z_u:u\in U\}
  \qquad\text{and}\qquad
  \spanop\{z_w:w\in W\},
\]
respectively, and $p_0$ is orthogonal to both spans.  Assign $p_0$ to either one of the
two sides, and let $M_1,M_2$ be the corresponding Gram matrices on $H_1$
and $H_2$.  Then both $(H_1,M_1)$ and $(H_2,M_2)$ are normalized equality
pairs.
\end{lemma}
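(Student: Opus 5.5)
The plan is to decompose $T$ and $S$ along the cut, apply Theorem~\ref{thm:dnn} to each side, and then show that equality forces each side to be a normalized equality pair. Say $p_0$ is assigned to $H_1$, so that $z_v^{(1)}=p_U+p_0$ and $z_v^{(2)}=p_W$; the other case is symmetric. Since $H$ has no edges between $U$ and $W$ and $M$ is supported on the diagonal and edges, $\langle z_u,z_w\rangle=0$ for $u\in U$, $w\in W$. Hence the spans of $\{z_u\}_{u\in U}$ and $\{z_w\}_{w\in W}$ are orthogonal, so $p_U,p_W,p_0$ are mutually orthogonal. Every edge of $H$ lies in exactly one of $H_1,H_2$. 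For an edge $uv$ with $u\in U$ we have $\langle z_u,z_v\rangle=\langle z_u,p_U+p_0\rangle$, and similarly on the $W$ side. So edge entries are preserved, and
\[
  S(H,M)=S(H_1,M_1)+S(H_2,M_2).
\]
Both $M_i$ are Gram matrices, hence PSD. Their entries are entries of $M$, except $(M_1)_{vv}=\|p_U\|^2+\|p_0\|^2\ge0$ and $(M_2)_{vv}=\|p_W\|^2$. The support condition is inherited, so both are doubly nonnegative and supported on the diagonal and edges of $H_i$.

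Next, I would write $a=\sum_{u\in U}z_u$ and $b=\sum_{w\in W}z_w$, with $a\perp b$, and compute
\[
  T(M)=\|a+b+z_v\|^2=\|a+p_U\|^2+\|b+p_W\|^2+\|p_0\|^2 ,
\]
using orthogonality: $a,p_U$ lie in the $U$-span, $b,p_W$ in the $W$-span, and $p_0$ is orthogonal to both. Meanwhile $T(M_1)=\|a+p_U+p_0\|^2=\|a+p_U\|^2+\|p_0\|^2$ and $T(M_2)=\|b+p_W\|^2$, so $T(M)=T(M_1)+T(M_2)$. The same identity holds with $p_0$ on the other side. Also $q(H)=q(H_1)+q(H_2)$, since edges split and $|V(H)|-1=(|V(H_1)|-1)+(|V(H_2)|-1)$. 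Both $H_i$ are connected with at least two vertices.

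Now Theorem~\ref{thm:dnn} gives $2S_i\le\sqrt{q_iT_i}$, where $S_i=S(H_i,M_i)$, $T_i=T(M_i)$, $q_i=q(H_i)$. Therefore
\[
  q(H)=2S(H,M)=2S_1+2S_2\le\sqrt{q_1T_1}+\sqrt{q_2T_2}\le\sqrt{(q_1+q_2)(T_1+T_2)}=q(H),
\]
by Cauchy--Schwarz. Equality holds throughout, so $4S_i^2=q_iT_i$ for each $i$, and $(\sqrt{q_1},\sqrt{q_2})$ is proportional to $(\sqrt{T_1},\sqrt{T_2})$. Since $q_1+q_2=T_1+T_2$ and $q_i\ge1>0$, proportionality gives $T_i=q_i$. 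This is exactly \eqref{eq:normalized-pair} for both pairs.

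The main obstacle is the bookkeeping in the $T$ decomposition. One has to verify that the cross terms vanish, namely $\langle a,p_W\rangle=0$, $\langle b,p_U\rangle=0$, and $p_0\perp a,b$. It must also be checked that placing $p_0$ on either side gives the same total, which is what makes the assignment free. The rest is the standard Cauchy--Schwarz equality argument; it needs $q_i>0$, which follows from connectedness of $H_i$.
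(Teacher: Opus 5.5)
Your proposal is correct and follows essentially the same route as the paper. It uses orthogonality of the two spans (from the support condition), the additive splitting of $S$, $T$ and $q$, Theorem~\ref{thm:dnn} on each side followed by scalar Cauchy--Schwarz, and the equality analysis forcing $T_i=q_i$. Your explicit computation $T(M)=\|a+p_U\|^2+\|b+p_W\|^2+\|p_0\|^2$ just spells out what the paper summarizes as ``the Gram sums split orthogonally.''
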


\begin{pf}
Because $M$ is supported on the diagonal and the edges of $H$, no entry
between $U$ and $W$ is nonzero.  Hence the two spans in
\eqref{eq:gram-cut} are orthogonal.  If, for example, $p_0$ is assigned to
the first side, then $M_1$ is the Gram matrix of the vectors $(z_u)_{u\in U}$
together with $p_U+p_0$ at $v$, while $M_2$ is the Gram matrix of
$(z_w)_{w\in W}$ together with $p_W$ at $v$.  The off-diagonal entries
involving $v$ agree with the corresponding entries of $M$, and all other
off-diagonal entries are inherited from $M$.  Thus $M_1,M_2$ are doubly
nonnegative and supported on the corresponding graphs.  The same is true if
$p_0$ is assigned to the second side.

For either assignment, the edge sets partition $E(H)$ and the Gram sums
split orthogonally, so
\begin{equation}\label{eq:split-ST}
  S(H,M)=S(H_1,M_1)+S(H_2,M_2),
  \qquad
  T(M)=T(M_1)+T(M_2).
\end{equation}
Also,
\[
  q(H_1)+q(H_2)=q(H).
\]
Set $q_i=q(H_i)$ and $T_i=T(M_i)$.  Since each $H_i$ is connected and has
at least two vertices, $q_i>0$.  Theorem~\ref{thm:dnn} and scalar
Cauchy--Schwarz give
\begin{equation}\label{eq:cut-chain}
  2S(H,M)
  \le \sqrt{q_1T_1}+\sqrt{q_2T_2}
  \le \sqrt{(q_1+q_2)(T_1+T_2)}
  =\sqrt{q(H)T(M)}.
\end{equation}
By \eqref{eq:normalized-pair}, both ends of \eqref{eq:cut-chain} equal
$q(H)$.  Hence equality holds at every step.  The first equality implies
that equality holds in Theorem~\ref{thm:dnn} for each $(H_i,M_i)$.
Equality in scalar Cauchy--Schwarz gives
\[
  \frac{T_1}{q_1}=\frac{T_2}{q_2}=:\rho.
\]
Using \eqref{eq:split-ST},
\[
  q(H)=T(M)=T_1+T_2=\rho(q_1+q_2)=\rho q(H),
\]
so $\rho=1$.  Therefore $T(M_i)=q(H_i)$ for $i=1,2$, and both local pairs
are normalized equality pairs.
\end{pf}

For completeness, we isolate the two estimates from the no-cut-vertex part
of the proof of Theorem~\ref{thm:dnn} that will be used below.

\begin{lemma}[No-cut-vertex estimates]
\label{lem:no-cut-estimates}
Let $H$ be a connected graph on $r\ge3$ vertices such that $H-v$ is
connected for every $v\in V(H)$.  Let $M$ be doubly nonnegative and
supported on the diagonal and the edges of $H$.  Write
\[
  m=|E(H)|,
  \qquad q=q(H),
  \qquad d_0=\tr M,
  \qquad w=\sum_{uv\in E(H)}M_{uv}.
\]
Then
\begin{align}
  4S(H,M)^2&\le 4mw,\label{eq:flat-estimate}\\
  4S(H,M)^2&\le(q-1)T(M)+\frac{q-1}{r-2}d_0.
  \label{eq:averaged-estimate}
\end{align}
\end{lemma}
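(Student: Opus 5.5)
Both estimates are applications of Cauchy--Schwarz. The first is applied directly over the edges. The second comes from averaging Theorem~\ref{thm:dnn} over all vertex-deleted subgraphs $H-v$; this is exactly where the hypothesis that every $H-v$ is connected is used.

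\emph{The flat estimate \eqref{eq:flat-estimate}.} The sum $S(H,M)=\sum_{uv\in E(H)}\sqrt{M_{uv}}$ has $m$ terms, and each $M_{uv}\ge0$ because $M$ is entrywise nonnegative. Cauchy--Schwarz therefore gives $S(H,M)^2\le m\sum_{uv\in E(H)}M_{uv}=mw$. Multiplying by $4$ yields \eqref{eq:flat-estimate}.

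\emph{The averaged estimate \eqref{eq:averaged-estimate}.} For each $v\in V(H)$, let $M_v$ be the principal submatrix of $M$ obtained by deleting row and column $v$. It is again doubly nonnegative. The graph $H-v$ is connected by hypothesis and has $r-1\ge2$ vertices, so Theorem~\ref{thm:dnn} applies:
\[
  2S(H-v,M_v)\le\sqrt{q(H-v)\,T(M_v)}.
\]
I will sum this over all $v$ and then use scalar Cauchy--Schwarz on the right-hand side:
\[
  \sum_v 2S(H-v,M_v)\le\sqrt{\Bigl(\sum_v q(H-v)\Bigr)\Bigl(\sum_v T(M_v)\Bigr)}.
\]
Three bookkeeping identities then finish the argument.
\begin{enumerate}
\item Each edge $uv$ disappears exactly in $H-u$ and in $H-v$, so $\sum_v S(H-v,M_v)=(r-2)S(H,M)$.
\item Since $q(H-v)=2(m-d_v)-(r-1)+1=q-2d_v+1$, summing over $v$ and using $\sum_v d_v=2m=q+r-1$ gives $\sum_v q(H-v)=rq+r-4m=(r-2)(q-1)$.
\item Deleting $v$ removes row $v$ and column $v$ of $M$, and the diagonal entry $M_{vv}$ is subtracted twice. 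Hence $T(M_v)=T(M)-2(M\one)_v+M_{vv}$, and summing gives $\sum_v T(M_v)=(r-2)T(M)+d_0$.
\end{enumerate}
Substituting these and squaring gives
\[
  4(r-2)^2S(H,M)^2\le(r-2)(q-1)\bigl((r-2)T(M)+d_0\bigr).
\]
Dividing by $(r-2)^2>0$ gives \eqref{eq:averaged-estimate}.

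No step is a serious obstacle. The only place needing care is the arithmetic $\sum_v q(H-v)=(r-2)(q-1)$, where one must use $2m=q+r-1$ correctly; this identity is what produces the factor $q-1$ rather than $q$.
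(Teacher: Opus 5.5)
Your proof is correct and follows the paper's argument essentially step for step. The flat bound is Cauchy--Schwarz over the $m$ edges. The averaged bound applies Theorem~\ref{thm:dnn} to each principal submatrix $M-v$, sums over $v$, applies scalar Cauchy--Schwarz, and uses the identities $\sum_v q(H-v)=(r-2)(q-1)$ and $\sum_v T(M-v)=(r-2)T(M)+d_0$. The only cosmetic difference is that you count each edge as lying in $r-2$ of the subgraphs $H-v$, whereas the paper writes $S(H,M)-\sigma_v$ and uses $\sum_v\sigma_v=2S(H,M)$.
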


\begin{pf}
The flat estimate \eqref{eq:flat-estimate} is Cauchy--Schwarz over the $m$
edges.

For \eqref{eq:averaged-estimate}, write $M-v$ for the principal submatrix
obtained by deleting the row and column indexed by $v$, and let
\[
  \sigma_v=\sum_{u\sim v}\sqrt{M_{uv}}.
\]
Since $H-v$ is connected, Theorem~\ref{thm:dnn} applied to the principal
submatrix $M-v$ gives
\[
  2\bigl(S(H,M)-\sigma_v\bigr)
  \le\sqrt{q(H-v)T(M-v)}.
\]
Summing over $v$ and using $\sum_v\sigma_v=2S(H,M)$ yields
\[
  2(r-2)S(H,M)
  \le\sum_v\sqrt{q(H-v)T(M-v)}.
\]
A further application of Cauchy--Schwarz gives
\begin{equation}\label{eq:delete-CS}
  2(r-2)S(H,M)
  \le
  \sqrt{\left(\sum_v q(H-v)\right)
             \left(\sum_v T(M-v)\right)}.
\end{equation}
Since $q(H-v)=q+1-2d_H(v)$ and $\sum_vd_H(v)=2m$,
\[
  \sum_vq(H-v)=(r-2)(q-1).
\]
Every diagonal entry of $M$ survives in $r-1$ of the principal submatrices
$M-v$, whereas every off-diagonal edge entry survives in $r-2$ of them.
Therefore
\[
  \sum_vT(M-v)=(r-2)T(M)+d_0.
\]
Substituting these identities into \eqref{eq:delete-CS}, squaring, and
dividing by $(r-2)^2$ proves \eqref{eq:averaged-estimate}.
\end{pf}

\begin{lemma}[Rigidity of a one-block equality pair]
\label{lem:one-block}
Let $(H,M)$ be a normalized equality pair, and suppose that $H$ has exactly
one block.  Then $H=K_r$ for some $r\ge2$, and
\begin{equation}\label{eq:one-block-matrix}
  M=\alpha_H^2J_r,
  \qquad
  \alpha_H=\frac{r-1}{r}.
\end{equation}
\end{lemma}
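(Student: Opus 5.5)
The plan is to split according to whether the single block is a bridge ($r=2$) or a $2$-connected graph ($r\ge3$). In the second case we play the two estimates of Lemma~\ref{lem:no-cut-estimates} against the normalization \eqref{eq:normalized-pair}. The squeeze should force $|E(H)|\ge\binom r2$, and the equality cases should then pin down $M$.

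\emph{Case $r=2$.} Here $H=K_2$ and $q(H)=1$. Write $M=\begin{pmatrix}a&c\\ c&b\end{pmatrix}$.
\begin{itemize}
\item The condition $4S(H,M)^2=1$ gives $4c=1$, so $c=1/4$.
\item The condition $T(M)=1$ gives $a+b=1/2$.
\item Positive semidefiniteness requires $ab\ge c^2=1/16$.
\item By AM--GM, $ab\le\bigl((a+b)/2\bigr)^2=1/16$, so $a=b=1/4$.
\end{itemize}
Hence $M=\tfrac14J_2$, which is \eqref{eq:one-block-matrix} with $\alpha_H=1/2$.

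\emph{Case $r\ge3$.} Now $H$ is $2$-connected, so $H-v$ is connected for every $v$, and Lemma~\ref{lem:no-cut-estimates} applies. Put $m=|E(H)|$ and $q=q(H)=2m-r+1$. Then $m\ge r$, so $q-1=2m-r>0$. The quantities $d_0=\tr M$ and $w$ satisfy $d_0+2w=T(M)=q$, because $M$ is supported on the diagonal and the edges.
\begin{enumerate}
\item Inserting $4S^2=q^2$ into \eqref{eq:flat-estimate} gives $w\ge q^2/(4m)$.
\item Inserting $4S^2=q^2$ and $T(M)=q$ into \eqref{eq:averaged-estimate} gives $q\le (q-1)d_0/(r-2)$, that is, $d_0\ge q(r-2)/(q-1)$.
\item Adding these bounds and dividing by $q$ yields
\[
1\ \ge\ \frac{r-2}{2m-r}+\frac{q}{2m}.
\]
\item Since $1-q/(2m)=(r-1)/(2m)$, this is equivalent to $2m(r-2)\le(r-1)(2m-r)$, which simplifies to $r(r-1)\le 2m$.
\end{enumerate}
Therefore $H=K_r$, and equality holds in both bounds.

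Equality in the flat Cauchy--Schwarz step makes all edge entries equal to a common value $c$. Then $c=w/m=q^2/(4m^2)$. With $m=\binom r2$ and $q=(r-1)^2$ this gives $c=\bigl((r-1)/r\bigr)^2=\alpha_H^2$.

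The main obstacle is the diagonal. The averaged bound only gives $d_0=q(r-2)/(q-1)=(r-1)^2/r=r\alpha_H^2$, which fixes the average diagonal entry but not each entry. To separate the entries, I would trace equality back through the proof of \eqref{eq:averaged-estimate}. Equality in the Cauchy--Schwarz step \eqref{eq:delete-CS} forces $T(M-v)/q(H-v)$ to be independent of $v$. For $K_r$ the value $q(H-v)$ is the same for every $v$, so $T(M-v)=T(M)-M_{vv}-2(r-1)c$ is constant in $v$. Hence all $M_{vv}$ are equal, and so each equals $d_0/r=\alpha_H^2$. This gives $M=\alpha_H^2J_r$.

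One must check that the equality is genuinely inherited by \eqref{eq:delete-CS}. This holds because every inequality in the derivation chain becomes an equality once both ends equal $q^2$.
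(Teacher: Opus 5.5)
Your proof is correct, and it uses the same two estimates as the paper, but it combines them differently and pins down the diagonal by a different argument.

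\emph{Forcing completeness.} For $r\ge3$, the paper proceeds in three steps:
\begin{enumerate}
\item It argues by contradiction that $2(r-1)w\le qd_0$, ruling out the ``second case'' via the averaged estimate and the simplicity bound $2\beta\le(r-1)(r-2)$.
\item It chains this with the flat estimate to obtain a common edge value and $d_0=\alpha(r-1)$.
\item Only then does it substitute back into \eqref{eq:averaged-estimate} to get $q\ge(r-1)^2$.
\end{enumerate}
You instead read each estimate as a lower bound on one piece of $T(M)=d_0+2w=q$ and add the two bounds. This gives $2m\ge r(r-1)$ in one line and uses simplicity only once, so your route is shorter and the mechanism is more transparent.

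\emph{The diagonal.} The paper uses only the conclusion $\sum_i M_{ii}=rc$, together with $M_{ii}M_{jj}\ge c^2$ from the $2\times2$ principal minors and AM--GM. It therefore treats Lemma~\ref{lem:no-cut-estimates} as a black box. You reopen that lemma's proof and use equality in the Cauchy--Schwarz step \eqref{eq:delete-CS}. Since $q(K_r-v)=(r-2)^2>0$ is the same for every $v$, equality forces $T(M-v)$, and hence $M_{vv}$, to be constant. This is valid because both ends of the chain equal $2(r-2)S$. However, it depends on the internal structure of that derivation. The paper's minor-based argument is self-contained and would survive any alternative proof of \eqref{eq:averaged-estimate}.
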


\begin{pf}
If $H=K_2$, write
\[
  M=\begin{pmatrix}a&c\\c&b\end{pmatrix}.
\]
Here $q(H)=T(M)=1$, and equality in \eqref{eq:dnn} gives $4c=1$; hence
$c=1/4$ and $a+b=1/2$.  Positive semidefiniteness gives
$ab\ge c^2=1/16$, while $ab\le(a+b)^2/4=1/16$.  Thus
$a=b=c=1/4$, which is \eqref{eq:one-block-matrix} for $r=2$.

Now assume $r=|V(H)|\ge3$.  Since $H$ has one block, $H-v$ is connected
for every vertex $v$.  Put
\[
  m=|E(H)|,
  \qquad q=q(H),
  \qquad d_0=\tr M,
  \qquad w=\sum_{uv\in E(H)}M_{uv}.
\]
We first show that the second case in the proof of
Theorem~\ref{thm:dnn} is necessarily strict.  Suppose
\begin{equation}\label{eq:second-case}
  2(r-1)w>qd_0.
\end{equation}
Set $\beta=m-r+1$.  Then $q=2\beta+r-1$ and, because $H$ is simple,
\[
  2\beta\le(r-1)(r-2).
\]
Consequently,
\[
  q(r-2)-2\beta(r-1)
  =(r-1)(r-2)-2\beta\ge0.
\]
Multiplying \eqref{eq:second-case} by $r-2$ gives
\[
  2\beta(r-1)d_0
  \le q(r-2)d_0
  <2(r-1)(r-2)w,
\]
and hence
\[
  \beta d_0<(r-2)w.
\]
Since $q-1=(r-2)+2\beta$ and $T(M)=d_0+2w$, this is equivalent to
\[
  \frac{q-1}{r-2}d_0<T(M).
\]
The averaged estimate \eqref{eq:averaged-estimate} would then give
\[
  4S(H,M)^2<qT(M),
\]
contrary to \eqref{eq:normalized-pair}.  Therefore
\begin{equation}\label{eq:first-case}
  2(r-1)w\le qd_0.
\end{equation}

Using $4m=2q+2(r-1)$, the flat estimate and
\eqref{eq:first-case} give
\[
  q^2=4S(H,M)^2
  \le4mw
  =2qw+2(r-1)w
  \le q(2w+d_0)
  =qT(M)=q^2.
\]
Thus equality holds throughout.  Equality in Cauchy--Schwarz in
\eqref{eq:flat-estimate} implies that all edge entries of $M$ have a common
value $c$.  Since $S(H,M)=m\sqrt c$, we obtain
\begin{equation}\label{eq:common-edge}
  c=\left(\frac{q}{2m}\right)^2.
\end{equation}
Set $\alpha=q/(2m)$.  From $T(M)=q$ and \eqref{eq:common-edge},
\begin{equation}\label{eq:diagonal-mass}
  d_0=q-2m\alpha^2
  =\frac{q(r-1)}{2m}
  =\alpha(r-1).
\end{equation}
Substituting equality, $T(M)=q$, and \eqref{eq:diagonal-mass} into
\eqref{eq:averaged-estimate} yields
\[
  q^2
  \le q(q-1)
     +\frac{q-1}{r-2}\frac{q(r-1)}{2m}.
\]
After division by $q>0$, this is equivalent to
\[
  2m(r-2)\le(q-1)(r-1).
\]
Since $2m=q+r-1$, the last inequality is equivalent to
\[
  q\ge(r-1)^2.
\]
On the other hand, simplicity gives
\[
  q=2m-r+1\le r(r-1)-r+1=(r-1)^2.
\]
Hence equality holds, $m=\binom r2$, and $H=K_r$.  Moreover,
\[
  q=(r-1)^2,
  \qquad
  \alpha=\frac{q}{2m}=\frac{r-1}{r}=\alpha_H.
\]

It remains to determine the diagonal of $M$.  Every off-diagonal entry is
$c=\alpha_H^2$, and \eqref{eq:diagonal-mass} gives
\[
  \sum_{i=1}^rM_{ii}=r\alpha_H^2=rc.
\]
Every $2\times2$ principal minor is positive semidefinite, so
$M_{ii}M_{jj}\ge c^2$ for $i\ne j$.  Multiplying over all unordered pairs,
\[
  \left(\prod_{i=1}^rM_{ii}\right)^{r-1}
  \ge c^{2\binom r2}=c^{r(r-1)},
\]
and therefore $\prod_iM_{ii}\ge c^r$.  By the arithmetic--geometric mean
inequality,
\[
  rc=\sum_{i=1}^rM_{ii}
  \ge r\left(\prod_{i=1}^rM_{ii}\right)^{1/r}
  \ge rc.
\]
Equality holds throughout, so $M_{ii}=c$ for every $i$.  This proves
\eqref{eq:one-block-matrix}.
\end{pf}

\begin{corollary}\label{cor:complete-blocks}
Let $(H,M)$ be a normalized equality pair.  Then every block $B$ of $H$ is
complete.  More precisely, repeatedly applying the cut-vertex splitting of
Lemma~\ref{lem:cut-splitting} until the block $B$ is isolated produces the
unique local matrix
\begin{equation}\label{eq:local-block-matrix}
  M_B=\alpha_B^2J_{|V(B)|},
  \qquad
  \alpha_B=\frac{|V(B)|-1}{|V(B)|}.
\end{equation}
\end{corollary}

\begin{pf}
Lemma~\ref{lem:cut-splitting} shows at every step that the local pairs are
again normalized equality pairs.  Once $B$ is isolated, the resulting graph
has exactly one block.  Lemma~\ref{lem:one-block} gives both the completeness
of $B$ and \eqref{eq:local-block-matrix}.
\end{pf}

Under either endpoint equality, let $X$ and $\eps$ be as in
\eqref{eq:X-epsilon}, and set
\begin{equation}\label{eq:endpoint-M}
  M=X\circ X.
\end{equation}
Using the folding notation in \eqref{eq:folding},
\begin{equation}\label{eq:endpoint-folding}
  M^c=X\circ X+
  \sum_{\{u,v\}\in\binom{V(G)}2\setminus E(G)}
  X_{uv}^2(e_u-e_v)(e_u-e_v)^{\top}.
\end{equation}
By Lemma~\ref{lem:endpoint} and \eqref{eq:fold-preserve}, $(G,M^c)$ is a
normalized equality pair.  We therefore obtain the following consequences.

\begin{proposition}\label{prop:endpoint-blocks}
Assume either endpoint equality in Lemma~\ref{lem:endpoint}.  Then every
block of $G$ is complete.  If an edge $uv$ belongs to a block $B=K_r$, then
\begin{equation}\label{eq:X-on-edge}
  X_{uv}=\eps\alpha_B
  =\eps\frac{r-1}{r}.
\end{equation}
In particular, if $uv$ is a bridge, then
\begin{equation}\label{eq:bridge-parts}
  (A_+)_{uv}=\frac12,
  \qquad
  (A_-)_{uv}=-\frac12.
\end{equation}
\end{proposition}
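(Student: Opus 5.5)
Under either endpoint hypothesis, Lemma~\ref{lem:endpoint} together with \eqref{eq:fold-preserve} shows that $(G,M^c)$ is a normalized equality pair, where $M=X\circ X$ as in \eqref{eq:endpoint-M}. Corollary~\ref{cor:complete-blocks} then applies to this pair, and it immediately gives that every block of $G$ is complete. This settles the first assertion. The plan for the edge values is to follow the off-diagonal edge entries of $M^c$ through the successive cut-vertex splittings and compare them with the local matrices $M_B=\alpha_B^2J$ supplied by Corollary~\ref{cor:complete-blocks}.

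The key observation concerns how edge entries behave under the splitting of Lemma~\ref{lem:cut-splitting}. If $uv$ is an edge lying inside $H_1$, then $M_1$ is the Gram matrix of the vectors $z_x$ with $x\in U$, with $z_v$ replaced by $p_U$ or $p_U+p_0$. Both $p_W$ and, where relevant, $p_0$ are orthogonal to every $z_u$ with $u\in U$. Hence $(M_1)_{uv}=\langle z_u,z_v\rangle=M_{uv}$, and entries between two vertices of $U$ are untouched. The proof of Lemma~\ref{lem:cut-splitting} in fact already records that all off-diagonal edge entries are inherited. Iterating until the block $B$ containing $uv$ is isolated, we get $(M^c)_{uv}=(M_B)_{uv}=\alpha_B^2$. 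The folding \eqref{eq:folding} preserves edge entries, so $(M^c)_{uv}=M_{uv}=X_{uv}^2$.

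Combining these, $X_{uv}^2=\alpha_B^2$, so $|X_{uv}|=\alpha_B$. Lemma~\ref{lem:endpoint} fixes the sign of $X_{uv}$ on every edge as $\eps$ (with $X_{uv}\le0$ when $\eps=-1$ and $X_{uv}\ge0$ when $\eps=1$). Since $\alpha_B>0$, this yields $X_{uv}=\eps\alpha_B$, which is \eqref{eq:X-on-edge}.

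For a bridge the block is $K_2$, so $\alpha_B=1/2$. This gives $(A_-)_{uv}=-1/2$ in the case $s^+=n-1$ and $(A_+)_{uv}=1/2$ in the case $s^-=n-1$. To obtain both values in \eqref{eq:bridge-parts} regardless of which case we are in, use $A_{uv}=1=(A_+)_{uv}-(A_-)_{uv}$ from \eqref{eq:spectral-parts}. In the first case $(A_+)_{uv}=1+(A_-)_{uv}=1/2$, and in the second case $(A_-)_{uv}=(A_+)_{uv}-1=-1/2$.

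The main point needing care is the claim that edge entries survive the repeated splittings unchanged. In particular, the edge $uv$ must always remain on the side being kept, and when $v$ is the cut vertex, the entry $\langle z_u,z_v\rangle$ must equal $\langle z_u,p_U\rangle$. Both facts follow from the orthogonality of the two spans established in the proof of Lemma~\ref{lem:cut-splitting}.
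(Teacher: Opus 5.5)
Your proposal is correct and follows the paper's proof essentially step for step. You apply Corollary~\ref{cor:complete-blocks} to the normalized equality pair $(G,M^c)$, use that folding and the cut-vertex splittings preserve edge entries to get $X_{uv}^2=\alpha_B^2$, fix the sign by Lemma~\ref{lem:endpoint}, and recover the other spectral part on a bridge from $A_{uv}=(A_+)_{uv}-(A_-)_{uv}=1$; your orthogonality check $\langle z_u,p_U\rangle=\langle z_u,z_v\rangle$ simply spells out what the proof of Lemma~\ref{lem:cut-splitting} already records.
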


\begin{pf}
Corollary~\ref{cor:complete-blocks} applied to $(G,M^c)$ shows that every block
is complete.  Folding does not change edge entries, and the cut-vertex splittings in
Lemma~\ref{lem:cut-splitting} preserve them.  Thus, if $uv\in E(B)$, then
\[
  X_{uv}^2=(M^c)_{uv}=\alpha_B^2.
\]
The edge-sign conclusion in Lemma~\ref{lem:endpoint} gives
\eqref{eq:X-on-edge}.  For a bridge, $\alpha_B=1/2$.  In either endpoint
case, \eqref{eq:X-on-edge} gives one of the two equalities in
\eqref{eq:bridge-parts}; the other follows from
\[
  1=A_{uv}=(A_+)_{uv}-(A_-)_{uv}.
\]
\end{pf}

\section{Leaf blocks and the exact block decomposition}
\label{sec:leaf-exact}

When $G$ has more than one block, a \emph{leaf block} is a block whose
node is a leaf of the block-cut tree.  Such a block contains exactly one
cut vertex of $G$.  When $G$ itself has one block, we also regard that block
as a leaf block.

\begin{lemma}\label{lem:positive-leaf}
If $G$ is connected and has a nontrivial leaf block, then
\[
  s^+(G)>|V(G)|-1.
\]
\end{lemma}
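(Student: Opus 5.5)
We argue by contradiction and reduce everything to a $2\times2$ principal minor of $A_-$ indexed by two twin vertices of the leaf block. Suppose $G$ has a nontrivial leaf block $B$, meaning $B$ is not a bridge, so $|V(B)|=r\ge3$, but $s^+(G)=|V(G)|-1$. We are then in the first endpoint case of \eqref{eq:X-epsilon}, with $X=A_-$ and $\eps=-1$. Proposition~\ref{prop:endpoint-blocks} applies, so every block is complete, and in particular $B=K_r$. It also gives
\[
  (A_-)_{uw}=-\frac{r-1}{r}\qquad\text{for every edge }uw\in E(B).
\]

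\emph{Step 1: twins.} Since $B$ is a leaf block, it contains at most one cut vertex of $G$. As $r\ge3$, we can choose two distinct vertices $u,w\in V(B)$ that are not cut vertices of $G$. Every neighbor of $u$ then lies in $B$, and likewise for $w$. Because $B$ is complete, $u$ and $w$ have the same closed neighborhood $V(B)$, so they are adjacent twins. Consequently $A(e_u-e_w)=-(e_u-e_w)$: the vector $e_u-e_w$ is an eigenvector of $A$ with eigenvalue $-1$. By the definition of $A_-$ in \eqref{eq:spectral-parts}, this gives $A_-(e_u-e_w)=e_u-e_w$.

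\emph{Step 2: diagonal entries.} The transposition $\pi$ of $u$ and $w$ is a graph automorphism, so its permutation matrix commutes with $A$ and hence with $A_-$. This gives $(A_-)_{uu}=(A_-)_{ww}$. Reading the $u$-coordinate of $A_-(e_u-e_w)=e_u-e_w$ gives
\[
  (A_-)_{uu}-(A_-)_{uw}=1,
  \qquad\text{so}\qquad
  (A_-)_{uu}=(A_-)_{ww}=1-\frac{r-1}{r}=\frac1r .
\]

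\emph{Step 3: contradiction.} Since $A_-\succeq0$, its principal $2\times2$ minor on $\{u,w\}$ is nonnegative:
\[
  \frac1{r^2}=(A_-)_{uu}(A_-)_{ww}\ge (A_-)_{uw}^2=\frac{(r-1)^2}{r^2}.
\]
This forces $r-1\le1$, that is $r\le2$, contradicting $r\ge3$. Hence $s^+(G)>|V(G)|-1$.

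The main point to check is Step 1. If $G$ is itself a single block $K_n$ with $n\ge3$, every vertex is a non-cut vertex, so the twin pair still exists. Otherwise the leaf block has exactly one cut vertex, which leaves at least $r-1\ge2$ non-cut vertices. The only other ingredient is the exact edge value from Proposition~\ref{prop:endpoint-blocks}, which already encodes the full equality analysis.
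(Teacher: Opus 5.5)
Your proof is correct and follows essentially the same route as the paper: adjacent twins in the leaf block give $A_-(e_u-e_w)=e_u-e_w$, the edge value $-(r-1)/r$ from Proposition~\ref{prop:endpoint-blocks} then forces diagonal entries $1/r$, and this contradicts positive semidefiniteness of $A_-$. The differences are cosmetic: you test a $2\times2$ principal minor instead of the vector $\one_L$ on all non-cut vertices of $B$, and you absorb the case $G=B$ into the twin argument rather than computing $s^+(K_t)=(t-1)^2$ directly. The automorphism step is also unnecessary, since reading the $w$-coordinate already gives $(A_-)_{ww}=1/r$.
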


\begin{pf}
By the non-strict bound of Liu, Tang, and Zhang \cite{LTZ}, it is enough to
exclude equality.  Suppose that $s^+(G)=n-1$.  By
Proposition~\ref{prop:endpoint-blocks}, a nontrivial leaf block is $B=K_t$ for
some $t\ge3$.  If $B=G$, then
\[
  s^+(G)=(t-1)^2>t-1=n-1,
\]
a contradiction.  Thus $B$ is a proper leaf block.  Let $v$ be its unique
cut vertex and put $L=V(B)\setminus\{v\}$.  No vertex of $L$ has a
neighbor outside $B$, since otherwise it would lie in a second block and
would be another cut vertex of the leaf block.  Hence, for distinct $x,y\in L$, the vertices $x$
and $y$ are adjacent twins, so
\[
  A(e_x-e_y)=-(e_x-e_y).
\]
Hence
\[
  A_-(e_x-e_y)=e_x-e_y.
\]
Set $\alpha=(t-1)/t$.  By \eqref{eq:X-on-edge},
$(A_-)_{xy}=-\alpha$.  Taking the $x$-coordinate in the preceding
eigenvector identity gives
\[
  (A_-)_{xx}-(A_-)_{xy}=1,
\]
so $(A_-)_{xx}=1/t$.  Therefore the principal submatrix $A_-[L]$ has
diagonal entries $1/t$ and off-diagonal entries $-(t-1)/t$.  Its eigenvalue
in the $\one_L$-direction is
\[
  \frac1t-(t-2)\frac{t-1}{t}
  =\frac{1-(t-1)(t-2)}{t}<0,
\]
contrary to $A_-\succeq0$.
\end{pf}

\begin{lemma}\label{lem:negative-leaf}
Suppose $s^-(G)=|V(G)|-1$.  If $G$ has a nontrivial leaf block $B$, then
$G=B$ is complete.
\end{lemma}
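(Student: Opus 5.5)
The plan is to argue by contradiction in the same spirit as Lemma~\ref{lem:positive-leaf}, but now with $X=A_+$ and $\eps=1$. Suppose $s^-(G)=n-1$ and $B$ is a nontrivial leaf block with $B\ne G$. By Proposition~\ref{prop:endpoint-blocks}, $B=K_t$ with $t\ge3$. Let $v$ be its unique cut vertex, put $L=V(B)\setminus\{v\}$ and $\alpha=(t-1)/t$. As before, vertices of $L$ have no neighbours outside $B$. Hence any two distinct $x,y\in L$ are adjacent twins, and $A(e_x-e_y)=-(e_x-e_y)$. This time the vector lies in a negative eigenspace, so it is annihilated by the positive part:
\[
  A_+(e_x-e_y)=0 .
\]
Thus all columns of $A_+$ indexed by $L$ coincide. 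Taking the $x$-coordinate and using $(A_+)_{xy}=\alpha$ from \eqref{eq:X-on-edge} gives $(A_+)_{xx}=\alpha$. Also $(A_+)_{xv}=\alpha$, again by \eqref{eq:X-on-edge}. Hence every entry of $A_+e_x$ indexed by $V(B)$ equals $\alpha$.

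Next I will show that $A_+e_x$ vanishes outside $B$, using $A_+^2=AA_+$ (a consequence of $A_+A_-=0$). On one hand, $(A_+^2)_{xx}=\sum_w (A_+)_{xw}^2\ge t\alpha^2+\sum_{w\notin B}(A_+)_{xw}^2$. On the other hand, $(AA_+)_{xx}=\sum_{w\sim x}(A_+)_{wx}=(t-1)\alpha$, since all neighbours of $x$ lie in $B$. Because $t\alpha^2=(t-1)\alpha$, every $(A_+)_{xw}$ with $w\notin V(B)$ must vanish. Therefore
\[
  A_+e_x=\alpha\one_B .
\]

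Since $B\ne G$ and $v$ is a cut vertex, $v$ has a neighbour $u\notin V(B)$. The edge $uv$ lies in some other block $B'=K_s$, so $(A_+)_{uv}=\alpha_{B'}=(s-1)/s<1$. Now compute the $u$-coordinate of $A_+^2e_x$ in two ways.
\begin{itemize}
\item Via $A_+^2e_x=AA_+e_x=\alpha A\one_B$: the vertex $u$ has exactly one neighbour in $B$, namely $v$, so this coordinate equals $\alpha$.
\item Via $A_+^2e_x=A_+(\alpha\one_B)$: this coordinate equals $\alpha\sum_{w\in V(B)}(A_+)_{uw}$. By the symmetry of $A_+$ and the previous step, $(A_+)_{uw}=(A_+)_{wu}=0$ for $w\in L$, so the sum reduces to $\alpha(A_+)_{uv}=\alpha\,\alpha_{B'}$.
\end{itemize}
Since $\alpha>0$, this forces $\alpha_{B'}=1$, which contradicts $\alpha_{B'}<1$. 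Hence $B=G$, and $G$ is complete by Proposition~\ref{prop:endpoint-blocks}.

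The main obstacle is the vanishing step: the off-block entries of the column $A_+e_x$. Everything else uses only twin eigenvectors and the edge values from Proposition~\ref{prop:endpoint-blocks}. The diagonal comparison via $A_+^2=AA_+$ closes that step exactly because $t\alpha^2=(t-1)\alpha$; this identity is where the specific value $\alpha_B=(t-1)/t$ is needed.
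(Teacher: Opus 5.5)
Your proof is correct and follows essentially the same route as the paper. The twin relation $A_+(e_x-e_y)=0$ together with the edge values pins down $A_+$ on $B$, the diagonal entry of $A_+^2=AA_+$ forces the off-block entries in the rows indexed by $L$ to vanish, and an entry of $A_+^2=AA_+$ indexed by a vertex of $L$ and a vertex outside $B$ gives the contradiction. The only difference is cosmetic: you let $A$ act at the outside vertex $u$ and get $(A_+)_{uv}=1$, contradicting $\alpha_{B'}<1$, whereas the paper lets $A$ act at $\ell\in L$ and gets $\alpha P_{vw}=P_{vw}$, hence $P_{vw}=0$, contradicting $P_{vw}>0$.
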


\begin{pf}
By Proposition~\ref{prop:endpoint-blocks}, write $B=K_t$ with $t\ge3$.
If $B=G$, there is nothing to prove.  Suppose that $B$ is a proper leaf
block.  Let $v$ be its cut vertex and put $L=V(B)\setminus\{v\}$.  As in
the preceding proof, no vertex of $L$ has a neighbor outside $B$.  Set
\[
  P=A_+,
  \qquad
  \alpha=\frac{t-1}{t}.
\]
For distinct $\ell,\ell'\in L$,
\[
  A(e_\ell-e_{\ell'})=-(e_\ell-e_{\ell'}),
\]
so $P(e_\ell-e_{\ell'})=0$.  Together with
\eqref{eq:X-on-edge}, this gives
\begin{equation}\label{eq:P-on-B}
  P_{\ell\ell'}=P_{\ell v}=P_{\ell\ell}=\alpha
  \qquad
  (\ell,\ell'\in L,\ \ell\ne\ell').
\end{equation}
Indeed, the first two equalities are the edge values, and the last follows
by taking the $\ell$-coordinate in
$P(e_\ell-e_{\ell'})=0$.

Because $P$ is the positive spectral part of $A$,
\begin{equation}\label{eq:P2AP}
  P^2=AP.
\end{equation}
At the entry $(\ell,\ell)$, \eqref{eq:P-on-B} gives
\[
  (AP)_{\ell\ell}
  =\sum_{x\sim\ell}P_{x\ell}
  =(t-1)\alpha=t\alpha^2.
\]
The $t$ entries of row $\ell$ indexed by $B$ already contribute
$t\alpha^2$ to
\[
  (P^2)_{\ell\ell}=\sum_xP_{\ell x}^2.
\]
All remaining summands are nonnegative, and therefore
\begin{equation}\label{eq:P-leaf-zero}
  P_{\ell w}=0
  \qquad(\ell\in L,\ w\notin B).
\end{equation}

Fix $w\notin B$.  Expanding \eqref{eq:P2AP} at $(\ell,w)$ and using
\eqref{eq:P-on-B}--\eqref{eq:P-leaf-zero},
\[
  (P^2)_{\ell w}
  =\sum_xP_{\ell x}P_{xw}
  =\alpha P_{vw},
\]
whereas, since $N_G(\ell)=B\setminus\{\ell\}$,
\[
  (AP)_{\ell w}
  =\sum_{x\sim\ell}P_{xw}
  =P_{vw}.
\]
Thus $\alpha P_{vw}=P_{vw}$.  Since $\alpha<1$, we have
$P_{vw}=0$ for every $w\notin B$.  But a proper leaf block has a neighbor
$w\notin B$ of its cut vertex $v$, and \eqref{eq:X-on-edge} gives
$P_{vw}>0$ on that edge, a contradiction.  Hence $G=B$.
\end{pf}

We next strengthen the local block information to a global matrix identity.
The proof by a minimal counterexample makes explicit both the disappearance
of the residual Gram component and the diagonal entry at each cut vertex.

\begin{proposition}[Exact decomposition of normalized equality pairs]
\label{prop:exact-decomposition}
Let $(H,M)$ be a normalized equality pair.  Then
\begin{equation}\label{eq:general-exact-decomposition}
  M=\sum_{B\in\calB(H)}
  \alpha_B^2\one_B\one_B^{\top},
  \qquad
  \alpha_B=\frac{|V(B)|-1}{|V(B)|}.
\end{equation}
\end{proposition}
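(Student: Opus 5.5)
We argue by induction on the number of blocks of $H$; equivalently, we take a counterexample $(H,M)$ with the fewest blocks. If $H$ has exactly one block, then Lemma~\ref{lem:one-block} gives $H=K_r$ and $M=\alpha_H^2J_r$, which is \eqref{eq:general-exact-decomposition}.

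Otherwise, choose a leaf block $B$ with unique cut vertex $v$. Put $U=V(B)\setminus\{v\}$ and let $W$ be the remaining vertices, so that $H_1=B$ and $H_2=H-U$. Take a Gram representation $M_{xy}=\langle z_x,z_y\rangle$ and decompose $z_v=p_U+p_W+p_0$ as in Lemma~\ref{lem:cut-splitting}. The key point is that this lemma allows $p_0$ to be assigned to either side, and we exploit both choices.

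\emph{Assignment of $p_0$ to the $B$ side.} Lemma~\ref{lem:one-block} makes the local matrix on $B$ equal to $\alpha_B^2J$. In particular,
\[
\lVert p_U+p_0\rVert^2=\alpha_B^2,
\]
and the Gram matrix of $(z_u)_{u\in U}$ together with $p_U+p_0$ is $\alpha_B^2J$. Hence $p_U+p_0=z_u$ for all $u\in U$, since equal Gram entries force the vectors to coincide. Thus $p_U+p_0\in\spanop\{z_u:u\in U\}$, which gives $p_0=0$.

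\emph{Assignment of $p_0$ to the $W$ side.} The local matrix on $B$ is again forced to be $\alpha_B^2J$, so
\[
\lVert p_U\rVert^2=\alpha_B^2 .
\]
This is consistent with $p_0=0$; the first assignment alone already suffices. We can therefore write
\[
z_v=p_U+p_W,\qquad p_U=z_u\ (u\in U),
\]
where $p_U$ and $p_W$ are orthogonal.

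Now $(H_2,M_2)$, with $M_2$ the Gram matrix of $(z_w)_{w\in W}$ together with $p_W$ at $v$, is a normalized equality pair by Lemma~\ref{lem:cut-splitting}. It has fewer blocks, so by minimality
\[
M_2=\sum_{B'\in\calB(H_2)}\alpha_{B'}^2\one_{B'}\one_{B'}^{\top}.
\]
We reassemble $M$ from these pieces.
\begin{itemize}
\item[(i)] For $x,y\in W$, the entries are inherited from $M_2$.
\item[(ii)] For $x\in W$, orthogonality of the spans gives $M_{vx}=\langle p_W,z_x\rangle=(M_2)_{vx}$.
\item[(iii)] For $u,u'\in U$, we have $M_{uu'}=\alpha_B^2$.
\item[(iv)] For $u\in U$, $M_{uv}=\langle z_u,p_U\rangle=\alpha_B^2$.
\item[(v)] For $u\in U$ and $x\in W$, $M_{ux}=0$ by support and orthogonality.
\item[(vi)] At $v$, $M_{vv}=\lVert p_U\rVert^2+\lVert p_W\rVert^2=\alpha_B^2+(M_2)_{vv}$.
\end{itemize}
These entries are exactly those of $\alpha_B^2\one_B\one_B^\top+M_2$, with $M_2$ extended by zeros, and $\calB(H)=\calB(H_2)\cup\{B\}$. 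This contradicts the choice of $(H,M)$ as a counterexample.

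The step I expect to need the most care is showing that the residual component $p_0$ vanishes, together with the identification $p_U=z_u$. Here one must check that the rigidity statement of Lemma~\ref{lem:one-block} applies to the local pair on $B$ with $p_0$ included. It does, because Lemma~\ref{lem:cut-splitting} asserts that this local pair is a normalized equality pair for that assignment, and $B$ is a single block. The identification then follows from $\lVert a-b\rVert^2=\alpha_B^2+\alpha_B^2-2\alpha_B^2=0$.
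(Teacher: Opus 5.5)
Your proof is correct and follows the paper's argument: a minimal counterexample, peeling off a leaf block $B$ at its cut vertex $v$, eliminating the residual Gram component $p_0$, and reassembling $M=\widetilde M_B+\widetilde M'$ entrywise. The one local difference is how $p_0$ is eliminated. The paper compares the $B$-side masses under the two assignments of $p_0$: both equal $q(B)$ by Lemma~\ref{lem:cut-splitting}, and they differ by $\lVert p_0\rVert^2$. You instead apply the rigidity $M_B=\alpha_B^2J$ of Lemma~\ref{lem:one-block} to a single assignment, obtaining $p_U+p_0=z_u\in\spanop\{z_u:u\in U\}$; this is equally valid and makes your second-assignment paragraph unnecessary.
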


\begin{pf}
Suppose that the assertion is false, and choose a counterexample $(H,M)$
with the minimum possible number of blocks.  If $H$ has only one block,
Lemma~\ref{lem:one-block} gives \eqref{eq:general-exact-decomposition}, a
contradiction.  Thus $H$ has at least two blocks.

Choose a leaf block $B$, let $v$ be its unique cut vertex, and set
\[
  H'=H-\bigl(V(B)\setminus\{v\}\bigr).
\]
Because $B$ is a leaf block, $V(B)\setminus\{v\}$ is a component of
$H-v$.  Consequently, $H'$ is connected, and its blocks are precisely the
blocks of $H$ other than $B$.  Put
\[
  U=V(B)\setminus\{v\},
  \qquad
  W=V(H')\setminus\{v\}.
\]
Take a Gram representation $M_{xy}=\langle z_x,z_y\rangle$.  Since $M$ is
supported on the diagonal and the edges of $H$, the spaces
\[
  \mathcal U=\spanop\{z_u:u\in U\},
  \qquad
  \mathcal W=\spanop\{z_w:w\in W\}
\]
are orthogonal.  Write
\begin{equation}\label{eq:leaf-gram}
  z_v=p_B+p'+p_0,
  \qquad
  p_B\in\mathcal U,
  \quad p'\in\mathcal W,
  \quad p_0\perp(\mathcal U+\mathcal W).
\end{equation}

There are two valid cut-vertex splittings: one assigns $p_0$ to the $B$-side,
and the other assigns $p_0$ to the $H'$-side.  By
Lemma~\ref{lem:cut-splitting}, the matrix on the $B$-side has mass $q(B)$
in either splitting.  The two masses are
\[
  \left\lVert\sum_{u\in U}z_u+p_B+p_0\right\rVert^2
  \quad\text{and}\quad
  \left\lVert\sum_{u\in U}z_u+p_B\right\rVert^2.
\]
Because $p_0\perp\mathcal U$, their difference is $\lVert p_0\rVert^2$.
Hence $p_0=0$.

Let $M_B$ and $M'$ be the local Gram matrices obtained from the vectors on
$B$ and $H'$, with $p_B$ and $p'$ placed at $v$, respectively.  Both
$(B,M_B)$ and $(H',M')$ are normalized equality pairs by
Lemma~\ref{lem:cut-splitting}.  Extend $M_B$ and $M'$ by zero to matrices on
$V(H)$ and denote the extensions by $\widetilde M_B$ and $\widetilde M'$.
All entries of $M$ away from $(v,v)$ are plainly the corresponding entries
of $\widetilde M_B+\widetilde M'$.  At the shared diagonal entry,
$p_B\perp p'$ and \eqref{eq:leaf-gram} give
\[
  M_{vv}=\lVert z_v\rVert^2
  =\lVert p_B+p'\rVert^2
  =\lVert p_B\rVert^2+\lVert p'\rVert^2
  =(M_B)_{vv}+(M')_{vv}.
\]
Therefore
\begin{equation}\label{eq:matrix-leaf-split}
  M=\widetilde M_B+\widetilde M'.
\end{equation}

Lemma~\ref{lem:one-block} gives
\[
  M_B=\alpha_B^2J_{|V(B)|}.
\]
The graph $H'$ has fewer blocks than $H$, so the minimality of $(H,M)$ gives
\[
  M'=\sum_{C\in\calB(H')}
  \alpha_C^2\one_C\one_C^{\top}
\]
on $V(H')$.  Substituting these two identities into
\eqref{eq:matrix-leaf-split} yields
\[
  M=\sum_{C\in\calB(H)}
  \alpha_C^2\one_C\one_C^{\top},
\]
contrary to the choice of $(H,M)$.  This proves the proposition.
\end{pf}

Applying Proposition~\ref{prop:exact-decomposition} to the normalized
equality pair $(G,M^c)$ in \eqref{eq:endpoint-folding} gives the precise
spectral identity needed in the next section.

\begin{corollary}\label{cor:folded-identity}
Assume either endpoint equality in Lemma~\ref{lem:endpoint}.  Let $X$ be
defined by \eqref{eq:X-epsilon}, set $M=X\circ X$ as in
\eqref{eq:endpoint-M}, and let $M^c$ be its folding.  Then
\begin{equation}\label{eq:folded-block-identity}
  M^c
  =X\circ X+
  \sum_{\{u,v\}\in\binom{V(G)}2\setminus E(G)}
  X_{uv}^2(e_u-e_v)(e_u-e_v)^{\top}
  =\sum_{B\in\calB(G)}
  \alpha_B^2\one_B\one_B^{\top}.
\end{equation}
If $h\in\R^{V(G)}$ satisfies
\begin{equation}\label{eq:block-zero-sum}
  \sum_{u\in V(B)}h_u=0
  \qquad\text{for every }B\in\calB(G),
\end{equation}
then
\begin{equation}\label{eq:hadamard-kernel}
  (X\circ X)h=0.
\end{equation}
Moreover, for every non-edge $uv$,
\begin{equation}\label{eq:nonedge-equality}
  X_{uv}\ne0
  \quad\Longrightarrow\quad
  h_u=h_v
  \quad\text{for every }h\text{ satisfying \eqref{eq:block-zero-sum}}.
\end{equation}
\end{corollary}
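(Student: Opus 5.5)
The identity \eqref{eq:folded-block-identity} follows from results already in hand. Under either endpoint equality, Lemma~\ref{lem:endpoint} together with \eqref{eq:fold-preserve} shows that $(G,M^c)$ is a normalized equality pair. We then apply Proposition~\ref{prop:exact-decomposition} to this pair and write $M^c$ out using \eqref{eq:endpoint-folding}.

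For \eqref{eq:hadamard-kernel}, take $h$ satisfying \eqref{eq:block-zero-sum}. Then $\one_B^{\top}h=0$ for every block $B$, so the right-hand side of \eqref{eq:folded-block-identity} annihilates $h$. Hence
\[
  (X\circ X)h+\sum_{uv\notin E(G)}X_{uv}^2(h_u-h_v)(e_u-e_v)=0 .
\]
Taking the inner product with $h$ gives
\[
  h^{\top}(X\circ X)h+\sum_{uv\notin E(G)}X_{uv}^2(h_u-h_v)^2=0 .
\]
By the Schur product theorem, $X\circ X\succeq0$, so every term on the left is nonnegative and each must vanish. Two conclusions follow:
\begin{enumerate}
\item $h^{\top}(X\circ X)h=0$, and since $X\circ X$ is positive semidefinite this forces $(X\circ X)h=0$;
\item $X_{uv}^2(h_u-h_v)^2=0$ for every non-edge $uv$, which gives \eqref{eq:nonedge-equality}.
\end{enumerate}

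The only substantive point is verifying the hypotheses of Proposition~\ref{prop:exact-decomposition} for $M^c$. We need $M^c$ doubly nonnegative and supported on the diagonal and the edges of $G$, which is recorded after \eqref{eq:folding}. We also need $T(M^c)=q(G)$ and $4S(G,M^c)^2=q(G)^2$. These come from Lemma~\ref{lem:endpoint}, which gives $T(X\circ X)=q(G)$ together with the equality case of \eqref{eq:dnn}, transported to $M^c$ by \eqref{eq:fold-preserve}.

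Everything after that is a short positive semidefiniteness argument. I expect no real obstacle beyond confirming that these hypotheses match exactly.
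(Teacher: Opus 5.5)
Your proposal is correct and follows essentially the same route as the paper. You verify that $(G,M^c)$ is a normalized equality pair and apply Proposition~\ref{prop:exact-decomposition}; then you use $M^c h=0$, the split of $h^{\top}M^c h$ into two nonnegative parts, and positive semidefiniteness of $X\circ X$ to obtain \eqref{eq:hadamard-kernel} and \eqref{eq:nonedge-equality}.
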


\begin{pf}
The identity \eqref{eq:folded-block-identity} follows immediately from
Proposition~\ref{prop:exact-decomposition}.  Condition
\eqref{eq:block-zero-sum} implies $M^c h=0$ by the right-hand side of
\eqref{eq:folded-block-identity}.  Hence
\begin{align*}
  0=h^{\top}M^c h
  &=h^{\top}(X\circ X)h\\
  &\quad+
  \sum_{\{u,v\}\in\binom{V(G)}2\setminus E(G)}
  X_{uv}^2(h_u-h_v)^2.
\end{align*}
Both terms are nonnegative: $X\circ X\succeq0$ by the Schur product theorem,
and every summand in the second term is nonnegative.  Thus
$h^{\top}(X\circ X)h=0$.  For a positive semidefinite matrix $P$,
\[
  h^{\top}Ph=\lVert P^{1/2}h\rVert^2,
\]
so $h^{\top}Ph=0$ implies $P^{1/2}h=0$ and hence $Ph=0$.  Taking
$P=X\circ X$ proves \eqref{eq:hadamard-kernel}.  Every summand in the non-edge sum also
vanishes, which proves \eqref{eq:nonedge-equality}.
\end{pf}

\section{The bridge--clique obstruction}
\label{sec:obstruction}

We need a simple extension property for the blockwise zero-sum conditions.

\begin{lemma}\label{lem:extension}
Fix a block $B_0$ of a connected graph $G$.  Every assignment on $V(B_0)$
whose coordinates sum to zero extends to a vector $h\in\R^{V(G)}$
satisfying \eqref{eq:block-zero-sum}.

Moreover, root the block-cut tree at the block node $B_0$.  If a branch is
entered through a cut vertex $c$ with $h_c=0$, then the extension may be
chosen identically zero on every graph vertex in that branch.
\end{lemma}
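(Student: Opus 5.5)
We construct the extension by induction along the block-cut tree, rooted at the block node $B_0$. Start with the prescribed values on $V(B_0)$, whose sum is zero, so \eqref{eq:block-zero-sum} already holds for $B_0$. We then process the blocks in breadth-first order away from the root.

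Let $B$ be a non-root block, and let $c$ be its parent cut vertex, that is, the cut-vertex node adjacent to $B$ on the path toward $B_0$. By the time $B$ is reached, $h_c$ has already been assigned, either because $c$ lies in $B_0$ or because $c$ lies in the parent block of $c$, which was processed earlier. No other vertex of $B$ has been assigned yet. This holds because the block-cut tree is a tree: any vertex of $V(B)\setminus\{c\}$ belongs either to $B$ alone or is a cut vertex lying in descendant blocks. Choose any vertex $x\in V(B)\setminus\{c\}$, which exists since $|V(B)|\ge2$. Put $h_x=-h_c$ and $h_y=0$ for the remaining vertices $y\in V(B)\setminus\{c,x\}$. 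Then the coordinates of $h$ sum to zero on $B$.

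Each vertex of $G$ receives exactly one value in this process. It is assigned either in $B_0$ or in the unique block whose parent cut vertex is not that vertex, namely the block nearest the root containing it. Since every block is eventually processed, \eqref{eq:block-zero-sum} holds for all blocks.

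For the second assertion, consider a branch entered through a cut vertex $c$ with $h_c=0$, so that the branch consists of the subtree below the node $c$. Processing the child blocks of $c$ with the rule above gives $h_x=-h_c=0$ and zero on all other new vertices. Hence every new vertex of these blocks receives the value $0$. By induction down the subtree, each subsequent parent cut vertex in the branch also has value $0$, so every block in the branch is filled with zeros. The choices made in different branches are independent, so this can be done while keeping the other branches arbitrary.

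The main point requiring care is the bookkeeping claim that, when $B$ is processed, exactly one of its vertices (the parent cut vertex) is already assigned. This follows from the standard facts that two blocks share at most one vertex, and that the blocks containing a fixed cut vertex form a star in the block-cut tree. Everything else is immediate.
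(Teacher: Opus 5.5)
Your proof is correct and follows essentially the same route as the paper: process the blocks away from the root $B_0$, set $h_x=-h_c$ at one new vertex of each child block and $0$ elsewhere, and note that this rule automatically propagates zeros through any branch whose entry cut vertex has value $0$. Your check that each vertex is assigned exactly once, in the block nearest the root that contains it, spells out the consistency remark the paper states in one line.
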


\begin{pf}
Root the block-cut tree at $B_0$.  Suppose values have been assigned through
a block, and let $C$ be a child block entered through its parent cut vertex
$c$.  Choose a vertex $r\in V(C)\setminus\{c\}$, set
\[
  h_r=-h_c,
\]
and set all other as-yet unassigned coordinates of $C$ equal to zero.  The
coordinates on $C$ then sum to zero.  Continue away from the root.  The
assignment is consistent because every non-root block has a unique parent
cut vertex, and a cut vertex retains the value assigned when it first
appears.  If $h_c=0$, choose all new values in $C$ to be zero and repeat this
choice throughout every descendant block in that branch.
\end{pf}

\begin{proposition}[Bridge--clique obstruction]
\label{prop:bridge-clique}
Assume either endpoint equality in Lemma~\ref{lem:endpoint}.  No bridge of
$G$ can meet a nontrivial block.
\end{proposition}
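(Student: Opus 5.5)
The plan is to argue by contradiction and test the commutation identity $AX=XA$ at one carefully chosen entry. Suppose a bridge $uv$ meets a nontrivial block $B$ at $v$. By Proposition~\ref{prop:endpoint-blocks}, $B=K_t$ with $t\ge3$, and
\[
  X_{uv}=\eps/2,\qquad X_{vw}=\eps(t-1)/t\quad(w\in V(B)\setminus\{v\}).
\]
Fix $w\in V(B)\setminus\{v\}$. Since $X$ is a spectral part of $A$, we have $AX=XA$, and I will compare the two sides at the entry $(u,w)$. The goal is to show that every term except one edge value on each side vanishes, which forces $(t-1)/t=1/2$, i.e.\ $t=2$, a contradiction.

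The vanishing will come from \eqref{eq:nonedge-equality}: for a non-edge $xy$, $X_{xy}=0$ as soon as some $h$ satisfying \eqref{eq:block-zero-sum} has $h_x\ne h_y$. Such vectors will be built with Lemma~\ref{lem:extension}, rooted at $B$.

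For the left side, $(AX)_{uw}=X_{vw}+\sum_{x\sim u,\,x\ne v}X_{xw}$. Each such $x$ lies in a block through $u$ other than $B$. Hence $x\notin V(B)$ and $x\not\sim w$, since otherwise $x$ and $w$ would share a block.
\begin{itemize}
\item I take $h$ with $h_v=0$, $h_w=1$, $h_{w'}=-1$ for another $w'\in V(B)\setminus\{v\}$ (possible since $t\ge3$), and $h=0$ elsewhere on $B$.
\item In the bridge block this gives $h_u=-h_v=0$, and by the second part of Lemma~\ref{lem:extension} every branch entered through $v$ or $u$ can be taken identically zero. So $h_x=0$.
\item Since $h_x=0\ne1=h_w$, we get $X_{xw}=0$, and therefore $(AX)_{uw}=X_{vw}=\eps(t-1)/t$.
\end{itemize}

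For the right side, $(XA)_{uw}=X_{uv}+\sum_{y\in V(B)\setminus\{v,w\}}X_{uy}+\sum_{y\sim w,\,y\notin V(B)}X_{uy}$. All $y$ in the last two sums are non-neighbours of $u$.
\begin{itemize}
\item For $y\in V(B)\setminus\{v\}$, I reuse a vector of the first kind with $h_v=0$, $h_y=1$, which again forces $h_u=0$. This gives $X_{uy}=0$; the case $y=w$ is the vanishing of $X_{uw}$ itself.
\item For $y\notin V(B)$ adjacent to $w$, I take $h_w=0$, $h_v=1$, $h_{w'}=-1$ for some $w'\in V(B)\setminus\{v,w\}$, and zero elsewhere on $B$. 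Then $h_u=-1$, while the branch at $w$ is entered with $h_w=0$ and can be taken zero, so $h_y=0$. Hence $X_{uy}=0$.
\end{itemize}
Thus $(XA)_{uw}=X_{uv}=\eps/2$.

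Equating the two sides gives $\eps(t-1)/t=\eps/2$, so $t=2$, contradicting $t\ge3$.

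The main obstacle is the bookkeeping in the extension lemma. For each unwanted term one must produce a blockwise zero-sum vector separating the two endpoints, and the critical point is exactly the neighbours of $w$ lying outside $B$. These are handled only by choosing $h_w=0$, which makes the branch at $w$ vanish while the bridge pushes the value $-h_v\ne0$ onto $u$. The availability of a third vertex $w'$ in $B$, guaranteed by $t\ge3$, is what makes both constructions possible.
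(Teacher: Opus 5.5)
Your proposal is correct and follows the paper's proof essentially step for step. You obtain the same two families of non-edge vanishings from blockwise zero-sum vectors built with Lemma~\ref{lem:extension}, and then compare $(AX)_{uw}$ with $(XA)_{uw}$. Your test vector for $y\in V(B)\setminus\{v,w\}$ ($h_v=0$, $h_y=1$) differs harmlessly from the paper's ($h_v=1$, $h_z=0$).

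One point is asserted rather than justified: that no neighbour $y$ of $w$ equals $u$ or is adjacent to $u$. The paper checks this explicitly. If $y=u$, the edges $uv$, $vw$, $wu$ form a triangle through the bridge $uv$. If $u\sim y$, then $u,y,w,v,u$ is a cycle through $uv$. Both are impossible.
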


\begin{pf}
By Proposition~\ref{prop:endpoint-blocks}, a nontrivial block is $B=K_t$ for
some $t\ge3$.  Suppose that a bridge $uv$ meets $B$ at $v$, and choose
$w\in V(B)\setminus\{v\}$.  With $X,\eps$ as in
\eqref{eq:X-epsilon}, \eqref{eq:X-on-edge} gives
\begin{equation}\label{eq:two-edge-values}
  X_{uv}=\frac{\eps}{2},
  \qquad
  X_{vw}=\eps\frac{t-1}{t}.
\end{equation}

We first claim that
\begin{equation}\label{eq:first-vanishing}
  X_{zw}=0
  \qquad(z\in N_G(u)\setminus\{v\}).
\end{equation}
Deleting the bridge $uv$ separates $u$ and $w$.  Every neighbor
$z\ne v$ of $u$ lies in the $u$-component of $G-uv$, while $w$ lies in the
$v$-component; hence $zw$ is a non-edge.  Choose
$r\in V(B)\setminus\{v,w\}$ and prescribe on $B$
\[
  h_v=0,
  \qquad h_w=1,
  \qquad h_r=-1,
\]
with all other coordinates on $B$ equal to zero.  The zero-sum condition on
the bridge block $\{u,v\}$ gives $h_u=-h_v=0$.  By
Lemma~\ref{lem:extension}, extend $h$ identically zero through the branch on
the $u$-side of the bridge.  In particular, $h_z=0\ne1=h_w$.
The contrapositive of \eqref{eq:nonedge-equality} gives
\eqref{eq:first-vanishing}.

We next claim that
\begin{equation}\label{eq:second-vanishing}
  X_{uz}=0
  \qquad(z\in N_G(w)\setminus\{v\}).
\end{equation}
The pair $uz$ is a non-edge.  Indeed, if $z=u$, then the edges $uv$, $vw$, and $wu$ form a triangle
containing $uv$;
if $z\ne u$ and $uz$ were an edge, then $u,z,w,v,u$ would be a cycle.
Either possibility contradicts that $uv$ is a bridge.

First suppose $z\in V(B)$.  Prescribe on $B$
\[
  h_v=1,
  \qquad h_z=0,
  \qquad h_r=-1
\]
for a vertex $r\in V(B)\setminus\{v,z\}$, and set all remaining coordinates
on $B$ equal to zero.  Extend this assignment by
Lemma~\ref{lem:extension}.  The bridge block then gives
$h_u=-h_v=-1\ne h_z$.

Now suppose $z\notin V(B)$.  Prescribe on $B$
\[
  h_v=1,
  \qquad h_w=0,
  \qquad h_r=-1
\]
for $r\in V(B)\setminus\{v,w\}$, with all other coordinates on $B$ equal
to zero.  Again the bridge block gives $h_u=-1$.  Since the edge $wz$ lies
in a block other than $B$, the vertex $w$ is a cut vertex and $z$ belongs
to a block-cut branch emanating from $B$ through $w$.  Because $h_w=0$,
Lemma~\ref{lem:extension} allows that entire branch to be assigned zero; in
particular, $h_z=0\ne h_u$.  In both cases, the contrapositive of
\eqref{eq:nonedge-equality} proves \eqref{eq:second-vanishing}.

The matrix $X$ is a spectral function of $A$, and therefore $AX=XA$.  At the
entry $(u,w)$, \eqref{eq:two-edge-values} and
\eqref{eq:first-vanishing} give
\[
  (AX)_{uw}
  =\sum_{z\sim u}X_{zw}
  =X_{vw}
  =\eps\frac{t-1}{t}.
\]
On the other hand, \eqref{eq:two-edge-values} and
\eqref{eq:second-vanishing} give
\[
  (XA)_{uw}
  =\sum_{z\sim w}X_{uz}
  =X_{uv}
  =\frac{\eps}{2}.
\]
These values are different because $t\ge3$, a contradiction.
\end{pf}

\section{Proof of the equality characterization}
\label{sec:proof-main}

\begin{pf}[Proof of Theorem~\ref{thm:main}]
The graph $K_1$ satisfies both assertions, so assume $n\ge2$.

If $G$ is a tree, then $G$ is bipartite, its adjacency spectrum is symmetric
about zero, and $|E(G)|=n-1$.  By \eqref{eq:trace-sum},
\[
  s^+(G)=s^-(G)=n-1.
\]
The spectrum of $K_n$ is $n-1,-1,\ldots,-1$, so
\[
  s^-(K_n)=n-1.
\]
This proves the easy directions.

Conversely, suppose first that $s^+(G)=n-1$ and that $G$ is not a tree.
Then $n\ge3$.  If $G$ had no cut vertex, Lemma~\ref{lem:no-cut-endpoint}
would force $G=K_n$, but $s^+(K_n)=(n-1)^2>n-1$, a contradiction.  Hence
$G$ has a cut vertex.  By Proposition~\ref{prop:endpoint-blocks}, every
block of $G$ is complete, so the block-cut tree is nontrivial.  Since $G$
is not a tree, it contains a nontrivial block.  Lemma~\ref{lem:positive-leaf}
shows that every leaf block is a bridge.

In a nontrivial block-cut tree, every cut-vertex node has degree at least
two, so every leaf node is a block node.  Choose a path from a nontrivial block
node to a leaf block node, and list its block nodes in order as
$B_0,B_1,\ldots,B_k$.  The
terminal block $B_k$ is a bridge.  Let $j$ be the least index for which
$B_j$ is a bridge.  Then $j\ge1$, and $B_{j-1}$ is non-bridge; since every
block is complete, $B_{j-1}$ is a nontrivial complete block.  The blocks
$B_{j-1}$ and $B_j$ meet at the intervening cut vertex, contrary to
Proposition~\ref{prop:bridge-clique}.  Therefore $G$ is a tree.

Now suppose that $s^-(G)=n-1$.  If $G$ is neither a tree nor complete,
Lemma~\ref{lem:no-cut-endpoint} shows that $G$ has a cut vertex.  Every
block is complete by Proposition~\ref{prop:endpoint-blocks}.  Since $G$ is
not complete, Lemma~\ref{lem:negative-leaf} shows that every leaf block is a
bridge.  The graph is not a tree, so it has a nontrivial block.  A path from a nontrivial block
to a leaf block again has a first bridge block whose preceding block is
nontrivial, contradicting Proposition~\ref{prop:bridge-clique}.  Hence $G$ is a
tree or a complete graph.
\end{pf}

\begin{pf}[Proof of Corollary~\ref{cor:strict}]
Liu, Tang, and Zhang proved the non-strict inequalities
\[
  \min \{s^+(G),s^-(G)\}\ge |V(G)|-1
\]
for every connected graph \cite{LTZ}.  If $G$ is neither a tree nor a
complete graph, Theorem~\ref{thm:main} excludes equality in both
inequalities.
\end{pf}

\section*{Acknowledgments}

Supported by National Natural Science Foundation of China (12571360, 12331012), Excellent 
University Research and Innovation Team in Anhui Province (2024AH010002, 2025AHGXZK10041).

The core ideas and proof strategy presented in this paper
were conceived and developed by the author. Eureka was used as an auxiliary
tool to test and check some of these ideas, after which the author
independently examined and verified the mathematical arguments. Eureka is a
multi-agent system developed by JIUCHONG at the University of Science and
Technology of China for mathematical research through human--AI interaction.
The author assumes full responsibility for the paper's content.

\end{document}